\newcommand{\N}{\mathbb{N}}
\newcommand{\Z}{\mathbb{Z}}
\newcommand{\C}{\mathbb{C}}
\newtheorem{theorem}{Theorem}[section]
\newtheorem{proposition}[theorem]{Proposition}
\newtheorem{lemma}[theorem]{Lemma}
\newtheorem{corollary}[theorem]{Corollary}
\theoremstyle{definition}
\theoremstyle{remark}
\newtheorem{remark}[theorem]{Remark}
\numberwithin{equation}{section}
\tikzset{join/.code=\tikzset{after node path={%
\ifx\tikzchainprevious\pgfutil@empty\else(\tikzchainprevious)%
edge[every join]#1(\tikzchaincurrent)\fi}}}
\tikzset{>=stealth',every on chain/.append style={join},
         every join/.style={->}}
\tikzstyle{labeled}=[execute at begin node=$\scriptstyle,
\begin{document}
\title{The Borel-Ritt problem in Beurling ultraholomorphic classes}

\author[A. Debrouwere]{Andreas Debrouwere}
\thanks{A. Debrouwere was supported by  FWO-Vlaanderen through the postdoctoral grant 12T0519N}
\address{A. Debrouwere, Department of Mathematics: Analysis, Logic and Discrete Mathematics\\ Ghent University\\ Krijgslaan 281\\ 9000 Gent\\ Belgium}
\email{andreas.debrouwere@UGent.be}

\subjclass[2010]{\emph{Primary.} 30E05, 30D60. \emph{Secondary.} 46E10, 46M18, 46A63} 
\keywords{Borel-Ritt problem; Beurling ultraholomorphic classes; homological algebra methods in functional analysis}

\begin{abstract}
We give a complete solution to the Borel-Ritt problem in non-uniform spaces $\mathscr{A}^-_{(M)}(S)$ of ultraholomorphic functions of Beurling type, where $S$ is  an unbounded sector  of the Riemann surface of the logarithm and $M$ is a strongly regular weight sequence. Namely, we characterize the surjectivity and the existence of a continuous linear right inverse of the asymptotic Borel map on $\mathscr{A}^-_{(M)}(S)$ in terms of the aperture of the sector $S$ and the weight sequence $M$. Our work improves previous results by  Thilliez \cite{Thilliez} and Schmets and Valdivia \cite{S-V}.
\end{abstract}
\maketitle

\section{Introduction}
The Borel-Ritt problem for spaces of ultraholomorphic functions goes back to the seminal work of Ramis \cite{Ramis} and plays an important role  in the study of formal power series solutions to various kinds of equations \cite{Balser}; see \cite{Debrouwere-Stieltjes,JG-S-S, JG-S-S-2, S-V,Thilliez} for an account of known results and open problems. In the present article, we consider this problem in the setting of non-uniform Beurling ultraholomorphic classes.

Let $M = (M_{p})_{p \in \N}$ be a sequence of positive real numbers and fix a sector of the Riemann surface $\Sigma$ of the logarithm
$$
S_\gamma = \{ z \in \Sigma \, | \, | \operatorname{Arg} z | < \frac{\gamma \pi}{2} \}, \qquad \gamma > 0.
$$
We define $\mathscr{A}_{(M)}(S_\gamma)$ as the Fr\'echet space consisting of all functions $f$ holomorphic in $S_\gamma$ such that
$$
\sup_{p \in \N} \sup_{z \in S_\gamma} \frac{n^p |f^{(p)}(z)|}{p!M_p} < \infty, \qquad \forall n \in \N,
$$
and set
$$
\mathscr{A}^-_{(M)}(S_\gamma) := \varprojlim_{\delta \to \gamma^-} \mathscr{A}_{(M)}(S_\delta).
$$
We call $\mathscr{A}_{(M)}(S_\gamma)$ and $\mathscr{A}^-_{(M)}(S_\gamma)$ \emph{the uniform and non-uniform space of ultraholomoprhic functions of class $(M)$ (of Beurling type) in $S_\gamma$}, respectively.  The corresponding sequence  space $\Lambda_{(M)}(\N)$ is defined as the Fr\'echet space consisting of all $(c_p)_{p \in \N} \in \C^\N$ such that
$$
\sup_{p \in \N}  \frac{n^p |c_p|}{p!M_p} < \infty,  \qquad \forall n \in \N.
$$
Consider the \emph{asymptotic Borel map}  
\begin{equation}
B: \mathscr{A}_{(M)}(S_\gamma) \rightarrow \Lambda_{(M)}(\N), \, f \mapsto (f^{(p)}(0))_{p \in \N},
\label{borel-mapping}
\end{equation}
where $f^{(p)}(0) := \lim_{\substack{z \to 0^+ \\ z \in S_{\gamma}}} f^{(p)}(z)$. We also define
\begin{equation}
B: \mathscr{A}^-_{(M)}(S_\gamma) \rightarrow \Lambda_{(M)}(\N), \, f \mapsto (f^{(p)}(0))_{p \in \N}, 
\label{borel-mapping-nu}
\end{equation}
where now $f^{(p)}(0) = \lim_{\substack{z \to 0^+ \\ z \in S_{\delta}}} f^{(p)}(z)$ for each $\delta < \gamma$.
Obviously, these maps are well-defined and continuous. In this context, the Borel-Ritt problem consists in finding conditions on $\gamma$  and $M$ which ensure that the maps  \eqref{borel-mapping} and \eqref{borel-mapping-nu} are surjective.  If this is the case, it is natural to ask in addition whether these maps have a continuous linear right inverse.

In 2003 Thilliez showed the following fundamental result; see Section \ref{sect-prelim} for unexplained notions about weight sequences.
\begin{theorem} \label{theorem-T} \cite[Corollary 3.4.1]{Thilliez} Let $M$ be a strongly regular weight sequence and let $0 < \gamma < \gamma(M)$. Then, the map \eqref{borel-mapping} is surjective. 
\end{theorem}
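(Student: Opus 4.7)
The plan is to produce, for each $(c_p) \in \Lambda_{(M)}(\N)$, a preimage $f \in \mathscr{A}_{(M)}(S_\gamma)$ under $B$ by a \emph{summation-with-cutoffs} construction adapted to $M$. Since the formal series $\sum_p (c_p/p!)\,z^p$ generally has radius of convergence zero, one cannot simply take its sum; instead, one seeks
$$
f(z) := \sum_{p=0}^{\infty}\frac{c_p}{p!}\, z^p\, \chi_p(z),
$$
where the $\chi_p$ are holomorphic cutoffs on $S_\gamma$ with $\chi_p(0) = 1$ (preserving the prescribed $p$-th Taylor coefficient at the origin) that decay sufficiently rapidly away from $0$ so that the series converges in $\mathscr{A}_{(M)}(S_\gamma)$.

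The analytic core of the argument is the construction of a single \emph{optimal flat function} $G$ on $S_\gamma$ from which all the $\chi_p$ are produced by rescaling. For $M$ strongly regular and $\gamma < \gamma(M)$, one constructs a bounded holomorphic function $G$ on $S_\gamma$ with $\lim_{z \to 0} G(z) = 1$ and $|G(z)| \le C\, h_M(|z|/K)$ for some $C, K > 0$, where $h_M$ is the associated function of $M$; this forces $G$ to decay faster than any power $|z|^{-p}$ as $|z| \to \infty$, with explicit quantitative control in terms of $M$. One produces $G$ by building a subharmonic majorant in $S_\gamma$ whose boundary behavior is dictated by $\log h_M$ and realizing it as $\log|G|$ for a suitable outer-type holomorphic $G$; the strong regularity of $M$ guarantees the required regularity of $\log h_M$, while the hypothesis $\gamma < \gamma(M)$ is precisely what makes the boundary data admissible on a sector of opening $\gamma\pi$. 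With $G$ in hand, set $\chi_p(z) := G(z/r_p)$ for a sequence $r_p \downarrow 0$ matched to the growth of $M_p$ (a natural scale is $r_p \sim (M_p)^{-1/p}$), chosen so that the quantity $|c_p|/p!\cdot \sup_{z \in S_\delta} |z|^p |G(z/r_p)|$ decays geometrically in $p$, uniformly on every subsector $S_\delta$ with $\delta < \gamma$.

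The verification then has two parts. First, $B(f) = (c_p)$: writing $f(z) - \sum_{p \le N}(c_p/p!)\,z^p$, the contribution from $p \le N$ is controlled by the flatness of $1 - G(z/r_p)$ at $0$, while the tail $p > N$ is controlled by the flatness of $G$ at infinity; passing to the limit $z \to 0$ in $S_\gamma$ yields $f^{(p)}(0) = c_p$ for every $p$. Second, the $\mathscr{A}_{(M)}(S_\gamma)$-seminorm bounds on $f^{(k)}$ follow from termwise differentiation, Cauchy's formula on disks of radius comparable to $|z|$ inscribed in $S_\gamma$, and the sup-norm estimates on the summands.

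The main obstacle is unquestionably the construction of the flat function $G$: once $G$ is available, the remaining scaling, summation, and Cauchy estimates are technical but routine, and the entire interplay between the aperture $\gamma$ and the weight $M$ is encoded precisely in the existence of such a $G$ on a sector of opening $\gamma\pi$ for $\gamma < \gamma(M)$. Any alternative route --- Borel--Laplace summation in the Gevrey case, or a solution operator for a $\overline{\partial}$-problem in the sector --- must produce a substitute of equivalent strength.
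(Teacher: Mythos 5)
First, a point of reference: the paper does not prove this statement at all --- Theorem \ref{theorem-T} is quoted from Thilliez \cite[Corollary 3.4.1]{Thilliez} and only used as an input (e.g.\ in Proposition \ref{surjective-S}). Thilliez's own argument does hinge, as you say, on his optimal flat functions, but he uses them inside extension/division-type operators rather than as rescaled cutoffs in a series, and that difference is where your sketch develops a real gap.

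The gap sits in the properties you assign to the cutoff generator $G$. For $B(f)=(c_p)$ with $f(z)=\sum_q (c_q/q!)\,z^q\chi_q(z)$ it is not enough that $\chi_q(0)=1$: since $f^{(p)}(0)$ is a limit of $p$-th derivatives, all derivatives of all cutoffs enter, so you need $1-\chi_q$ to be flat at the origin to every order, and in fact $M$-flat (of size $e^{-\omega_M(c/|z|)}$), or else the terms with $q<p$ pollute $f^{(p)}(0)$ and ruin the uniform $(M)$-bounds. Simultaneously, since $\mathscr{A}_{(M)}(S_\gamma)$ requires global sup-bounds on the unbounded sector, each $\chi_q$ must decay at infinity faster than $|z|^{-q}$ with $M$-control. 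So your single $G$ must be ``$1$ plus an $M$-flat function at $0$'' and $M$-flat at infinity at the same time. The conditions you actually impose do not deliver this and are inconsistent as written: with the usual associated function $h_M(t)=\inf_p M_pt^p$, the bound $|G(z)|\le Ch_M(|z|/K)$ forces $G(z)\to0$ as $z\to0$ (contradicting $G(0)=1$) and gives no decay at all as $|z|\to\infty$, since $h_M$ is constant for large arguments. Thilliez's optimal flat functions are flat at $0$ and merely bounded, while the classical Borel--Ritt cutoffs $1-\exp(-b_pz^{-1/\beta})$ behave correctly at $0$ but decay only like $|z|^{-1/\beta}$ at infinity, which is fatal here. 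A ``doubly flat'' $G$ (for instance $e^{-\psi}$ with $\psi$ flat at $0$ and $\operatorname{Re}\psi\gtrsim\omega_M(|z|)$ at infinity) can likely be built, but that is a second nontrivial construction your proposal neither identifies as necessary nor carries out; the flatness of $1-G$ at $0$ is invoked only at verification time, never established. Two further steps would still need repair: the Beurling estimates (for every $n$) must be checked for one fixed scaling sequence $(r_p)$, and Cauchy estimates on disks inscribed in $S_\gamma$ degenerate near the boundary rays, so the whole construction must be carried out on a strictly larger aperture $\delta\in(\gamma,\gamma(M))$ and then restricted --- which is precisely how the hypothesis $\gamma<\gamma(M)$ is exploited in the known proofs.
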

Schmets and Valdivia \cite{S-V}  already proved in 2000 the surjectivity of the map \eqref{borel-mapping} under the following more restrictive condition: $\gamma < n < \gamma(M)$ for some $n \in \Z_+$. But, in contrast to Thilliez, they also showed that the map \eqref{borel-mapping} has a continuous linear right inverse. Theorem \ref{theorem-T} is  believed to be optimal in the sense that the condition $\gamma < \gamma(M)$ is also necessary for the map \eqref{borel-mapping} to be surjective. However, for general strongly regular weight sequences, it is only known that the surjectivity of the map \eqref{borel-mapping} implies $\gamma \leq \gamma(M)$ (cf.\ \cite{JG-S-S})\footnote{This result is shown in \cite[Corollary 4.19]{JG-S-S} in the Roumieu case but, as we shall explain in Section \ref{sect-main}, the proof provided there can be adapted to the Beurling case.}.

In this article we give a complete solution to the Borel-Ritt problem in the non-uniform spaces $\mathscr{A}^-_{(M)}(S_\gamma)$. More precisely, we show the following result.
\begin{theorem} \label{intro}  Let $M$ be a strongly regular weight sequence and let $\gamma > 0$. Then, the following statements are equivalent:
\begin{itemize}
\item[$(i)$] $\gamma \leq \gamma(M)$.
\item[$(ii)$] $B: \mathscr{A}^-_{(M)}(S_\gamma) \rightarrow \Lambda_{(M)}(\N)$ is surjective.
\item[$(iii)$] $B: \mathscr{A}^-_{(M)}(S_\gamma) \rightarrow \Lambda_{(M)}(\N)$ has a continuous linear right inverse.
\end{itemize}
\end{theorem}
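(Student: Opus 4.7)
The argument proceeds by verifying the cycle $(iii) \Rightarrow (ii) \Rightarrow (i) \Rightarrow (iii)$. The first implication is trivial. The second reduces, via the observation that surjectivity of $B$ on $\mathscr{A}^-_{(M)}(S_\gamma)$ entails surjectivity of $B$ on each uniform space $\mathscr{A}_{(M)}(S_\delta)$ with $\delta<\gamma$, to the Beurling analogue of the necessity result of Jim\'enez-Garrido, Sanz and Schindl \cite[Corollary~4.19]{JG-S-S}, whose adaptation is announced in the footnote. The substantive work lies in $(i) \Rightarrow (iii)$, which I would attack via homological methods for projective spectra of Fr\'echet spaces.

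Set $A_\delta := \mathscr{A}_{(M)}(S_\delta)$ and $K_\delta := \ker(B\colon A_\delta \to \Lambda_{(M)}(\N))$, the space of flat ultraholomorphic functions on $S_\delta$. Assuming $\gamma \leq \gamma(M)$, Theorem \ref{theorem-T} supplies, for every $\delta \in (0,\gamma)$, a short exact sequence
\[
0 \longrightarrow K_\delta \longrightarrow A_\delta \stackrel{B}{\longrightarrow} \Lambda_{(M)}(\N) \longrightarrow 0
\]
of Fr\'echet spaces. Letting $\delta \uparrow \gamma$ produces a short exact sequence of projective spectra whose right-hand term is constant with identity transition maps, so its $\operatorname{Proj}^1$ vanishes. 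The associated six-term exact sequence in the Palamodov--Wengenroth formalism then reads
\[
0 \to \varprojlim K_\delta \to \mathscr{A}^-_{(M)}(S_\gamma) \stackrel{B}{\to} \Lambda_{(M)}(\N) \to \operatorname{Proj}^1 (K_\delta) \to \operatorname{Proj}^1 (A_\delta) \to 0,
\]
and reduces the surjectivity of the projective-limit Borel map to the vanishing $\operatorname{Proj}^1 (K_\delta)_\delta = 0$. I would verify the latter through a Retakh--Wengenroth criterion, relying on truncated Cauchy-integral kernels of the type underlying Thilliez's extension operators to produce, at each level, quantitatively controlled approximate preimages of flat functions from the next level.

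To upgrade surjectivity to the existence of a continuous linear right inverse, I would invoke Vogt's splitting theorem: the exact sequence
\[
0 \to \ker B \to \mathscr{A}^-_{(M)}(S_\gamma) \to \Lambda_{(M)}(\N) \to 0
\]
of Fr\'echet spaces splits provided $\Lambda_{(M)}(\N)$ has property $(\mathrm{DN})$ and $\ker B$ has property $(\Omega)$. Property $(\mathrm{DN})$ is immediate from the defining seminorms $\sup_p n^p |c_p|/(p!M_p)$, which exhibit $\Lambda_{(M)}(\N)$ as a power series space of infinite type. Property $(\Omega)$ for $\ker B$ requires a finer quantitative analysis of the flat spaces $K_\delta$ and of their projective limit, based again on integral representations combined with the moderate-growth bounds available under strong regularity of $M$.

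The chief obstacle is the joint verification of $\operatorname{Proj}^1 (K_\delta) = 0$ and property $(\Omega)$ for $\ker B$: both hinge on the quantitative statement that, given $\delta < \delta' < \gamma$ and a flat function on $S_{\delta'}$ with controlled weighted sup-norm, one can produce a flat function on $S_\gamma$ differing from it inside $S_\delta$ by an error of quantitatively controlled size. Such estimates genuinely use the full strength of strong regularity of $M$---in particular Thilliez's condition $(\gamma_1)$---and are what makes the borderline case $\gamma = \gamma(M)$ accessible in the non-uniform projective-limit setting, in contrast to the uniform situation of Theorem \ref{theorem-T}.
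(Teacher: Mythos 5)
Your overall architecture (Mittag--Leffler/$\operatorname{Proj}^1$ for the surjectivity in the limit case, Vogt's $(DN)$-$(\Omega)$ splitting theorem for the right inverse, and reduction of $(ii)\Rightarrow(i)$ to the uniform-space necessity result of Jim\'enez-Garrido--Sanz--Schindl) is the same as the paper's, and your treatment of $(ii)\Rightarrow(i)$ and of the identification $\Lambda_{(M)}(\N)\cong\Lambda_\infty(j)$ is fine. The gap is in the technical core: you work with the spectrum of kernels $K_\delta=\ker\bigl(B\colon\mathscr{A}_{(M)}(S_\delta)\to\Lambda_{(M)}(\N)\bigr)$ and simply assert that $\operatorname{Proj}^1\bigl((K_\delta)_\delta\bigr)=0$ and that $\ker\bigl(B\colon\mathscr{A}^-_{(M)}(S_\gamma)\to\Lambda_{(M)}(\N)\bigr)$ has $(\Omega)$, to be checked ``via a Retakh--Wengenroth criterion'' and ``truncated Cauchy-integral kernels.'' No argument is given, and this is precisely the point the paper states it does \emph{not} know how to handle for the spaces $\mathscr{A}^-_{(M)}(S_\gamma)$ (Remark \ref{explanation}). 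The obstruction is concrete: under $z\mapsto\operatorname{Log}z$ the flat functions in $\mathscr{A}_{(M)}(S_\delta)$ become holomorphic functions on the strip $V_\delta$ that decay like $e^{-n\nu_M(|\operatorname{Re}z|)}$ only as $\operatorname{Re}z\to-\infty$ and are merely bounded as $\operatorname{Re}z\to+\infty$, whereas the linear-topological results available (from \cite{Debrouwere}) require rapid decay on \emph{both} ends of the strip. So the two properties on which your whole implication $(i)\Rightarrow(iii)$ rests are left unproven and are, as far as is known, open for your choice of kernels.

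The paper's way around this is the step your proposal is missing: it first sharpens Thilliez's theorem to Proposition \ref{surjective-S}, producing interpolating functions with the extra decay $e^{-\omega_M(n|z|)}$ at infinity (by multiplying Thilliez's interpolant by $P(z)=G(1/(z+1))$, where $G$ is Thilliez's flat function for a slightly smaller sequence $N\prec M$). This lets one run the whole Mittag--Leffler and splitting machinery inside the weighted spaces $\mathscr{S}^-_{(M)}(S_\gamma)\subset\mathscr{A}^-_{(M)}(S_\gamma)$, whose flat kernels are (by Lemmas \ref{kernel} and \ref{equivalence}) topologically isomorphic to the spaces $\mathscr{U}^-_{(\nu_M)}(V_\gamma)$ of holomorphic functions with two-sided rapid decay on strips, for which $\operatorname{Proj}^1=0$ and $(\Omega)$ are actually established in \cite{Debrouwere} precisely up to the critical width $h(\nu_M)=\gamma(M)$ (this is also where the borderline case $\gamma=\gamma(M)$ is genuinely used, via Lemma \ref{gammaM3}); surjectivity and the right inverse for $\mathscr{A}^-_{(M)}(S_\gamma)$ then follow trivially by inclusion. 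To repair your proposal you would either have to supply proofs of the two asserted properties for the unweighted flat kernels (which would be a new result beyond the paper), or introduce an auxiliary class with decay at infinity as the paper does.
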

The implication $(ii) \Rightarrow (i)$ follows  from the analogous statement for the uniform spaces $\mathscr{A}_{(M)}(S_\gamma)$ mentioned above, whereas the implication $(i) \Rightarrow (ii)$ for $\gamma < \gamma(M)$ is a consequence of Theorem \ref{theorem-T}. The main novelty of Theorem \ref{intro}  comprises of  the following two results: The map \eqref{borel-mapping-nu} is surjective in the limit case $\gamma = \gamma(M)$; the map \eqref{borel-mapping-nu} is surjective if and only if it has a continuous linear right inverse. Furthermore,
Theorem \ref{intro} yields the following corollary for the uniform spaces $\mathscr{A}_{(M)}(S_\gamma)$; it simultaneously improves the results of Thiliez \cite{Thilliez} (Theorem \ref{theorem-T})  and Schmets and Valdivia \cite{S-V} mentioned above.
\begin{corollary}Let $M$ be a strongly regular weight sequence and let $0 < \gamma < \gamma(M)$. Then, the map \eqref{borel-mapping} has a continuous linear right inverse. 
\end{corollary}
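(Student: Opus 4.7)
The plan is to deduce this corollary directly from Theorem \ref{intro} applied to a slightly wider aperture. Given $0 < \gamma < \gamma(M)$, I would fix any $\gamma'$ with $\gamma < \gamma' \leq \gamma(M)$. By the implication $(i) \Rightarrow (iii)$ of Theorem \ref{intro}, the Borel map $B : \mathscr{A}^-_{(M)}(S_{\gamma'}) \to \Lambda_{(M)}(\N)$ admits a continuous linear right inverse, which I denote by $T$.

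Next I would exploit the projective limit description $\mathscr{A}^-_{(M)}(S_{\gamma'}) = \varprojlim_{\delta \to \gamma'^-} \mathscr{A}_{(M)}(S_\delta)$. Since $\gamma < \gamma'$, the aperture $\gamma$ occurs among the indices $\delta$, so the projective topology yields a canonical continuous linear map $\rho : \mathscr{A}^-_{(M)}(S_{\gamma'}) \to \mathscr{A}_{(M)}(S_\gamma)$, namely restriction of a function holomorphic on $S_{\gamma'}$ to the smaller sector $S_\gamma$.

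The composition $R := \rho \circ T : \Lambda_{(M)}(\N) \to \mathscr{A}_{(M)}(S_\gamma)$ will then be the desired continuous linear right inverse of \eqref{borel-mapping}. Linearity and continuity are immediate. For the right-inverse identity, I would fix $c = (c_p) \in \Lambda_{(M)}(\N)$: since $T$ is a right inverse for the Borel map on the non-uniform space, the limit of $(Tc)^{(p)}(z)$ as $z \to 0$ inside $S_\delta$ equals $c_p$ for every $\delta < \gamma'$ and every $p \in \N$. Specializing to $\delta = \gamma$ shows that the $p$-th Borel coefficient of $R(c) = \rho(Tc)$, now computed within $\mathscr{A}_{(M)}(S_\gamma)$, equals $c_p$, so $B(R(c)) = c$.

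The substantive work is entirely packaged into Theorem \ref{intro}; the only step worth checking here is that restriction to a smaller sector is a continuous map from $\mathscr{A}^-_{(M)}(S_{\gamma'})$ to $\mathscr{A}_{(M)}(S_\gamma)$, but this is built into the projective-limit definition of $\mathscr{A}^-_{(M)}(S_{\gamma'})$. Consequently no genuine obstacle arises, and the corollary emerges as a direct \textquotedblleft transfer\textquotedblright\ of the right inverse from the non-uniform class on $S_{\gamma'}$ to the uniform class on $S_\gamma$.
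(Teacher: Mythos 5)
Your argument is correct and is exactly how the paper intends the corollary to follow from Theorem \ref{intro}: pick $\gamma' \in (\gamma, \gamma(M)]$, take the continuous linear right inverse on $\mathscr{A}^-_{(M)}(S_{\gamma'})$ provided by $(i)\Rightarrow(iii)$, and compose with the (continuous, norm-respecting) restriction to $S_\gamma$, noting that the Borel coefficients computed in $S_\gamma$ agree with those of the non-uniform class. No gap; this matches the paper's derivation.
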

Our proof of Theorem \ref{intro} is based on two classical results from the general theory of Fr\'echet spaces, namely, the abstract Mittag-Leffler Lemma \cite{Wengenroth} and the $(DN)$-$(\Omega)$ splitting theorem \cite{Vogt}.  We explain these results in Section \ref{sect-aux1}. Moreover, we make use of certain linear topological properties of spaces of holomorphic functions with rapid decay on strips, recently obtained by the author in  \cite{Debrouwere}. These spaces and their properties are reviewed in Section \ref{sect-aux2}. We also show there how these spaces  are connected to the Borel-Ritt problem for Beurling ultraholomorphic classes. Finally, the proof of our main result Theorem \ref{intro}  is given in Section \ref{sect-main}. We would like  to point out that the technical core of the proof of Theorem \ref{intro} is contained in the results we use from \cite{Debrouwere}.

\section{Beurling ultraholomorphic classes}\label{sect-prelim}
In this preliminary section we introduce various spaces of ultraholomorphic functions of Beurling type that are used throughout this paper. Furthermore, we state our main result.

We start by discussing weight sequences. A sequence $M = (M_{p})_{p \in \N}$ of positive real  numbers is called a \emph{weight sequence} if $\lim_{p \to \infty} M_{p}^{1/p} =  \infty$. We set $m_p = M_p/M_{p-1}$ for $p \in \Z_+$. 
We consider the following conditions on a weight sequence $M$:
\begin{itemize}
\item[$(M.1)$] \emph{(log-convexity)}  $M^2_{p} \leq M_{p-1}M_{p+1}$, $p \in \Z_+$.
\item[$(M.2)$]  \emph{(moderate growth)} $M_{p+q} \leq CL^{p+q} M_{p} M_{q}$, $p,q\in \N$, for some $C,L>0$.
\item[$(M.3)$]   \emph{(strong non-quasianalyticity)}$\displaystyle  \sum_{q = p+1}^\infty \frac{1}{m_q} \leq  \frac{Cp}{m_{p+1}}$, $p \in \Z_+$, for some $C > 0$.
\end{itemize}
We refer to  \cite{Komatsu} for the meaning of these conditions. Following Thilliez \cite{Thilliez}, we call a weight sequence $M$ \emph{strongly regular} if $M$ satisfies $(M.1)$ and $(M.2)$, and $(p!M_p)_{p \in \N}$ satisfies $(M.3)$.  The most important examples of strongly regular weight sequences are the \emph{Gevrey sequences} $p!^{\alpha}$, $\alpha>0$. 

Let $M$ be a weight sequence and let $\gamma > 0$. We say that $M$ satisfies property $(P_\gamma)$ if there exists a sequence $(n_p)_{p \in \Z_+}$ of positive real numbers such that $(n_p/p^\gamma)_{p \in \Z_+}$ is non-decreasing and there is $C > 0$ such that $C^{-1}n_p \leq m_p \leq Cn_p$ for all $p \in \Z_+$. We define the \emph{growth index} $\gamma(M)$ of $M$ as \cite[Definition 1.3.5]{Thilliez}
$$
\gamma(M) := \sup \{\gamma > 0 \, | \, M \mbox{ satisfies } (P_\gamma) \}.
$$
If $M$ is strongly regular, then $0 <\gamma(M) < \infty $ \cite[Lemma 1.3.2]{Thilliez}. 

The \emph{associated function} of a weight sequence $M$ is defined as
$$
\omega_M(t) := \sup_{p\in\N}\log \frac{t^pM_0}{M_p},\qquad t \geq 0.
$$
Suppose that $M$ satisfies $(M.1)$. Then, $M$ satisfies $(M.2)$ if and only if  \cite[Proposition 3.6]{Komatsu}
\begin{equation}
2\omega_M(t) \leq \omega_M(Lt) +  C, \qquad t \geq 0,
\label{assM2}
\end{equation}
for some $C,L >0$, whereas $\gamma(M) > 0$ implies that \cite[Corollary 2.14 and Corollary 4.6$(i)$]{JG-S-S-ind}
\begin{equation}
\omega_M(2t) \leq L\omega_M(t) + C, \qquad t \geq 0,
\label{assgamma}
\end{equation}
for some $C,L > 0$. 

The relation $M \prec N$ between two weight sequences means that  for all $L > 0$ there is $C > 0$ such that
$M_p\leq CL^{-p}N_{p}$ for all $p\in\N$. By \cite[Lemma 3.10]{Komatsu}, we have that $M \prec N$ if and only if for all 
$$
\omega_N(Lt) \leq \omega_M(t) +  C, \qquad t \geq 0,
$$
for all $L > 0$ and suitable $C = C_L > 0$.

Let $M$ be a weight sequence and let $s > 0$. We define $M^s = (M^s_p)_{p \in \N}$. Note that $\omega_{M^s}(t) = s \omega_M(t^{1/s})$ for $t \geq 0$ and $\gamma(M^s) = s \gamma(M)$. Furthermore, $M^s$ is strongly regular if $M$ is so \cite[Lemma 1.3.4]{Thilliez}.

We  need the following characterization of $\gamma(M)$  \cite{JG-S-S-ind}.
\begin{lemma} \label{gammaM3}
Let $M$ be a strongly regular weight sequence  and let $\gamma > 0$. Then, the following statements are equivalent:
\begin{itemize}
\item[$(i)$] $\gamma < \gamma(M)$.
\item[$(ii)$] $M^{1/\gamma}$ satisfies $(M.3)$.
\item[$(iii)$] There are $C,L > 0$ such that
$$
\int_1^\infty \frac{\omega_M(ts)}{s^{1+ (1/\gamma)}} ds \leq L \omega_M(t) + C, \qquad t \geq 0. 
$$
\end{itemize}
\end{lemma}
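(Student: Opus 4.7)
The plan is to establish (i) $\Leftrightarrow$ (ii) $\Leftrightarrow$ (iii) by first obtaining a clean translation between (ii) and (iii) via Komatsu's integral reformulation of $(M.3)$, and then linking these to (i) through an equivalent ``almost-increasing'' description of condition $(P_\gamma)$.

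I would first observe that $(P_\gamma)$ for a log-convex weight sequence $M$ is equivalent to the sequence $(m_p/p^\gamma)_{p\in\Z_+}$ being almost-increasing: given such monotonicity up to a constant, one simply sets $n_p := p^\gamma \sup_{q \leq p} m_q/q^\gamma$, which is comparable to $m_p$ and has $n_p/p^\gamma$ non-decreasing by construction. Using this reformulation, $(i) \Rightarrow (ii)$ is a short summation: if $\gamma' \in (\gamma, \gamma(M))$ with $(P_{\gamma'})$ holding, then for $q > p$ one has $m_q^{1/\gamma} \gtrsim (q/(p+1))^{\gamma'/\gamma}\, m_{p+1}^{1/\gamma}$, and summing $1/m_q^{1/\gamma}$ using $\sum_{q > p} q^{-\gamma'/\gamma} \lesssim p^{1 - \gamma'/\gamma}$ (which converges since $\gamma'/\gamma > 1$) yields $\sum_{q > p} 1/m_q^{1/\gamma} \leq C\, p/m_{p+1}^{1/\gamma}$, which is $(M.3)$ for $M^{1/\gamma}$.

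For (ii) $\Leftrightarrow$ (iii), I would invoke Komatsu's characterization \cite[Proposition 4.4]{Komatsu}: a log-convex weight sequence $N$ satisfies $(M.3)$ if and only if there exist $C,L > 0$ with $\int_1^\infty \omega_N(ts)/s^2\, ds \leq L\omega_N(t) + C$ for $t \geq 0$. Since $M^{1/\gamma}$ is log-convex (as $M$ is) and $\omega_{M^{1/\gamma}}(u) = (1/\gamma)\omega_M(u^\gamma)$, the substitution $w = s^\gamma$, followed by the reparametrization $r := t^\gamma$ on the right-hand side, turns Komatsu's integral into precisely the integral in (iii) up to constant factors in $C$ and $L$. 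The two statements are therefore equivalent by a direct change of variables.

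Finally, for $(iii) \Rightarrow (i)$, the task is to extract an almost-monotonicity of $m_p/p^{\gamma'}$ for some $\gamma' > \gamma$ from the integral inequality. The strategy is to unwind (iii) into a quantitative pointwise estimate on $\omega_M$ at dyadic scales via the defining supremum $\omega_M(t) = \sup_p \log(t^p M_0 /M_p)$; this yields a lower bound of the form $m_p \geq c\, p^{\gamma(1+\varepsilon)}$ for some $\varepsilon > 0$, after which the log-convexity of $M$ can be used to upgrade this pointwise bound into the almost-increasingness of $m_p/p^{\gamma'}$ for a suitable $\gamma' \in (\gamma, \gamma(1+\varepsilon))$, giving $(P_{\gamma'})$ and hence $\gamma < \gamma(M)$. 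This last implication is the main obstacle: the other two directions are essentially a summation and a change of variables, but here one has to convert integral/summability data into pointwise super-$\gamma$ growth of the ratios $m_p$, and this conversion depends delicately on the interaction between $(M.1)$ and the integral condition.
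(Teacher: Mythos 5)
Your reductions $(i)\Rightarrow(ii)$ and $(ii)\Leftrightarrow(iii)$ are sound: the almost-increasing reformulation of $(P_\gamma)$, the summation argument using $\sum_{q>p} q^{-\gamma'/\gamma} \lesssim p^{1-\gamma'/\gamma}$, and the passage between $(ii)$ and $(iii)$ via the classical integral characterization of $(M.3)$ for log-convex sequences (Komatsu) combined with $\omega_{M^{1/\gamma}}(t) = \gamma^{-1}\omega_M(t^{\gamma})$ and the substitutions $s=u^{\gamma}$, $r=t^{\gamma}$ all check out. Be aware, though, that the paper itself proves none of this directly: its proof of the lemma consists entirely of citations to Jim\'enez-Garrido--Sanz--Schindl \cite{JG-S-S-ind} (Corollary 3.12$(iii)$ for $(i)\Leftrightarrow(ii)$; Theorem 2.11 and Corollary 4.6$(iii)$ for $(i)\Leftrightarrow(iii)$), so a self-contained argument must in particular reproduce the hard direction contained in those references.

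That hard direction is exactly where your proposal has a genuine gap, namely $(iii)\Rightarrow(i)$ (equivalently $(ii)\Rightarrow(i)$). The proposed mechanism --- extract a pointwise lower bound $m_p \geq c\,p^{\gamma(1+\varepsilon)}$ from $(iii)$ and then ``upgrade'' it to almost-increasingness of $m_p/p^{\gamma'}$ using log-convexity --- cannot work as stated. Property $(P_{\gamma'})$ is a two-index comparison, $m_p/p^{\gamma'} \leq C\, m_q/q^{\gamma'}$ for all $p\leq q$, and it is not implied by a one-sided pointwise power bound plus monotonicity of $(m_p)$: if, for example, $m_p$ equals $p_k^{2\beta}$ on the blocks $p_k \leq p < p_{k+1}$ with $p_{k+1}=p_k^{2}$, then $(m_p)$ is non-decreasing and $m_p \geq p^{\beta}$ for all $p$, yet $\bigl(m_{p_k}/p_k^{\gamma'}\bigr)\big/\bigl(m_{p_{k+1}-1}/p_{k+1}^{\gamma'}\bigr)\to\infty$, so $m_p/p^{\gamma'}$ is almost increasing for no $\gamma'>0$. (Such a sequence need not satisfy $(iii)$; the point is that your intermediate conclusion is strictly weaker than $(P_{\gamma'})$, so the argument cannot be completed from it.) What is actually required is a relative estimate $m_q \gtrsim (q/p)^{\gamma'} m_p$ for $q\geq p$, obtained e.g.\ from Potter-type bounds $\omega_M(st)\leq C s^{1/\gamma'}\omega_M(t)+C$ together with the counting-function representation of $\omega_M$ for log-convex sequences, or, on the sequence side, from Petzsche's characterization of $(M.3)$ via $\liminf_p m_{kp}/m_p > k$ for some $k$ followed by the construction of a comparison sequence --- this is precisely the content of the results of \cite{JG-S-S-ind} that the paper invokes. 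Until this implication is supplied (or replaced by that citation), the lemma is not proved.
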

\begin{proof}
$(i) \Leftrightarrow (ii)$ This is shown in \cite[Corollary 3.12$(iii)$]{JG-S-S-ind}.

$(i) \Leftrightarrow (iii)$ This follows from \cite[Theorem 2.11 and Corollary 4.6$(iii)$]{JG-S-S-ind}.
\end{proof}
Next, we introduce various spaces of ultraholomorphic functions of Beurling type. We denote by $\Sigma$ the Riemann surface of the logarithm. Given an open subset $\Omega$ of $\Sigma$ (or  of the complex plane $\C$), we write $\mathscr{O}(\Omega)$ for the space of holomorphic functions in $\Omega$.  We define the sectors
$$
S_\gamma := \{ z \in \Sigma \, | \, | \operatorname{Arg} z | < \frac{\gamma \pi}{2} \}, \qquad \gamma > 0.
$$

Let $M$ be a weight sequence and let $\gamma >0$. We define $\mathscr{A}_{(M)}(S_\gamma)$ as the Fr\'echet space consisting of all $f \in \mathscr{O}(S_\gamma)$ such that
$$
\sup_{p \in \N} \sup_{z \in S_\gamma} \frac{n^p |f^{(p)}(z)|}{p!M_p} < \infty, \qquad \forall n \in \N,
$$
and $\mathscr{S}_{(M)}(S_\gamma)$ as the Fr\'echet space consisting of all $f \in \mathscr{O}(S_\gamma)$ such that
$$
\sup_{p \in \N} \sup_{z \in S_\gamma} \frac{n^p |f^{(p)}(z)|e^{\omega_M(n|z|)}}{p!M_p} < \infty, \qquad \forall n \in \N.
$$
We are mainly interested in the following non-uniform spaces
$$
\mathscr{A}^{-}_{(M)}(S_\gamma) := \varprojlim_{\delta \rightarrow \gamma^-} \mathscr{A}_{(M)}(S_\delta), \qquad \mathscr{S}^{-}_{(M)}(S_\gamma) := \varprojlim_{\delta \rightarrow \gamma^-} \mathscr{S}_{(M)}(S_\delta).
$$
Note that $\mathscr{S}_{(M)}(S_\gamma) \subset \mathscr{A}_{(M)}(S_\gamma)$ and $\mathscr{S}^{-}_{(M)}(S_\gamma) \subset \mathscr{A}^{-}_{(M)}(S_\gamma)$  continuously. 
We define $\Lambda_{(M)}(\N)$ as the Fr\'echet space consisting of all sequences $(c_p)_{p \in \N} \in \C^\N$ such that
$$
 \sup_{p \in \N} \frac{n^p |c_p|}{p!M_p} < \infty, \qquad \forall n \in \N.
$$ 
The \emph{asymptotic Borel map} is defined as
$$
B: \mathscr{A}^{-}_{(M)}(S_\gamma) \rightarrow \Lambda_{(M)}(\N), \, f \mapsto  (f^{(p)}(0))_{p \in \N},
$$
where 
$$
f^{(p)}(0) := \lim_{\substack{z \to 0^+ \\ z \in S_{\delta}}} f^{(p)}(z) \in \C, \qquad p \in \N,
$$
exist for each $\delta < \gamma$. Obviously, this map is well-defined and continuous. We  also use the notation $B$ for the restriction of the asymptotic Borel map to various subspaces of $\mathscr{A}^{-}_{(M)}(S_\gamma)$. To avoid confusion, we always clearly state the domain and range of these maps. 

 We are ready to formulate the main result of this article. 
\begin{theorem}\label{main}
Let $M$ be a strongly regular weight sequence and let $\gamma > 0$. The following statements are equivalent:
\begin{itemize}
\item[$(i)$] $\gamma \leq \gamma(M)$.
\item[$(ii)$] $B:\mathscr{A}^{-}_{(M)}(S_\gamma) \rightarrow \Lambda_{(M)}(\N)$ is surjective. 
\item[$(iii)$] $B: \mathscr{A}^{-}_{(M)}(S_\gamma) \rightarrow \Lambda_{(M)}(\N)$ has a continuous linear right inverse.
 \item[$(iv)$] $B: \mathscr{S}^{-}_{(M)}(S_\gamma) \rightarrow \Lambda_{(M)}(\N)$ is surjective. 
\item[$(v)$] $B: \mathscr{S}^{-}_{(M)}(S_\gamma) \rightarrow \Lambda_{(M)}(\N)$ has a continuous linear right inverse.
\end{itemize}
\end{theorem}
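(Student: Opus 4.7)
The plan is to prove the five-way equivalence via the chain $(i) \Rightarrow (v) \Rightarrow (iv) \Rightarrow (ii) \Rightarrow (i)$ together with $(v) \Rightarrow (iii) \Rightarrow (ii)$. The implications $(v) \Rightarrow (iv) \Rightarrow (ii)$ and $(v) \Rightarrow (iii) \Rightarrow (ii)$ are immediate from the continuous inclusion $\mathscr{S}^{-}_{(M)}(S_\gamma) \subseteq \mathscr{A}^{-}_{(M)}(S_\gamma)$, while $(ii) \Rightarrow (i)$ follows by transferring the Roumieu argument of \cite[Corollary 4.19]{JG-S-S} to the Beurling side, as announced in the footnote to Theorem \ref{intro}. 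The whole content of the theorem is therefore concentrated in the implication $(i) \Rightarrow (v)$, which I would split into two steps: first surjectivity of $B$ on $\mathscr{S}^{-}_{(M)}(S_\gamma)$, and then existence of a continuous linear splitting.

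For the surjectivity step, I would apply the abstract Mittag-Leffler Lemma to the projective system
$$
0 \longrightarrow \ker B_\delta \longrightarrow \mathscr{S}_{(M)}(S_\delta) \stackrel{B_\delta}{\longrightarrow} \Lambda_{(M)}(\N) \longrightarrow 0, \qquad 0 < \delta < \gamma,
$$
with transition maps given by restriction. Since the right-hand term is independent of $\delta$, its projective limit is $\Lambda_{(M)}(\N)$ itself, and Mittag-Leffler then yields surjectivity of $B$ on $\mathscr{S}^{-}_{(M)}(S_\gamma)$ as soon as each $B_\delta$ is surjective and the projective system of kernels $(\ker B_\delta)_{\delta < \gamma}$ satisfies the Mittag-Leffler condition. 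Both of these inputs I would derive from the strip-space viewpoint of \cite{Debrouwere}: the conformal substitution $w = \log z$ identifies $\mathscr{S}_{(M)}(S_\delta)$ and its subspace of flat functions with spaces of rapidly decreasing holomorphic functions on the strip $|\Imm w| < \delta\pi/2$, for which the relevant surjectivity of the Borel-type map (a decay-sharpened form of Thilliez's Theorem \ref{theorem-T}) and the relevant Mittag-Leffler condition on kernels are precisely the linear-topological results obtained in \cite{Debrouwere}.

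For the splitting step, I would apply Vogt's $(DN)$-$(\Omega)$ splitting theorem to the short exact sequence
$$
0 \longrightarrow \ker B \longrightarrow \mathscr{S}^{-}_{(M)}(S_\gamma) \stackrel{B}{\longrightarrow} \Lambda_{(M)}(\N) \longrightarrow 0.
$$
The range $\Lambda_{(M)}(\N)$ is a nuclear power series space of infinite type associated with the weight $(\log(p!M_p))_{p \in \N}$, so it automatically satisfies $(DN)$; only $(\Omega)$ for $\ker B$ needs verification. Under the same substitution $w = \log z$, $\ker B$ is isomorphic to a countable projective limit of spaces of rapidly decreasing holomorphic functions on strips of widths $\delta\pi/2$ with $\delta \uparrow \gamma$, and $(\Omega)$ for this family is again one of the main outputs of \cite{Debrouwere}. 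Vogt's theorem then produces the continuous linear right inverse asserted in $(v)$, and $(iii)$ follows by post-composition with the inclusion $\mathscr{S}^{-}_{(M)}(S_\gamma) \hookrightarrow \mathscr{A}^{-}_{(M)}(S_\gamma)$.

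The main obstacle I anticipate is the boundary case $\gamma = \gamma(M)$: there Thilliez's theorem gives nothing on the full sector $S_\gamma$, only on each strict subsector $S_\delta$, so that both the Mittag-Leffler condition on the kernels and the property $(\Omega)$ have to be pushed uniformly all the way up to the limit. The integral characterization in Lemma \ref{gammaM3}$(iii)$ appears to be the analytic ingredient that keeps the strip-space estimates of \cite{Debrouwere} uniform in $\delta \uparrow \gamma(M)$, and its careful exploitation at the edge of its range of validity is likely to be the most delicate part of the argument.
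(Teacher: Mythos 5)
Your strategy is essentially the paper's: the trivial implications, a Mittag--Leffler argument for surjectivity of $B$ on $\mathscr{S}^{-}_{(M)}(S_\gamma)$, Vogt's splitting theorem (Theorem \ref{splitting}) applied to the sequence $0 \to \mathscr{S}^{-,0}_{(M)}(S_\gamma) \to \mathscr{S}^{-}_{(M)}(S_\gamma) \to \Lambda_{(M)}(\N) \to 0$ with $(\Omega)$ for the kernel coming from the strip spaces of \cite{Debrouwere} via the substitution $w=\operatorname{Log} z$ (Lemmas \ref{kernel}--\ref{equivalence}, Corollary \ref{proj-omega}), and the Beurling adaptation of the Roumieu necessity argument for $(ii)\Rightarrow(i)$, which the paper carries out through the ramification Lemma \ref{ramification} and Proposition \ref{necc} (resting on \cite{S-V} and \cite{JG-S-S}). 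Your chain $(i)\Rightarrow(v)\Rightarrow(iv)\Rightarrow(ii)$ versus the paper's $(i)\Rightarrow(iv)\Rightarrow(v)\Rightarrow(iii)\Rightarrow(ii)$ is an immaterial reordering of the same two steps.

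There is, however, one genuine gap. The surjectivity of each $B_\delta:\mathscr{S}_{(M)}(S_\delta)\to\Lambda_{(M)}(\N)$ for $\delta<\gamma(M)$ --- your ``decay-sharpened form of Thilliez's Theorem \ref{theorem-T}'' --- is \emph{not} one of the results of \cite{Debrouwere}: that reference provides only the linear-topological properties of the flat (kernel) spaces, namely the vanishing of $\operatorname{Proj}^1$ and property $(\Omega)$ for holomorphic functions with rapid decay on strips, and says nothing about solving the Borel problem with the extra decay $e^{-\omega_M(n|z|)}$ at infinity. This is a separate analytic ingredient, and the paper proves it as Proposition \ref{surjective-S}: for $\delta<2$ one takes a Thilliez extension of a corrected jet and multiplies it by a nonvanishing function $P(z)=G(1/(z+1))$, where $G$ is the flat function of \cite[Theorem 2.3.1]{Thilliez} associated with the auxiliary sequence $N=M^{\delta'/\gamma(M)}\prec M$, the correction of the jet being given by convolution with the Taylor coefficients of $1/P$; for $\delta\geq 2$ one reduces to that case by the ramification $z\mapsto z^{1/n}$ applied to $M^{1/n}$, using Lemma \ref{ramification} and $(M.2)$. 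Without some such argument your exact sequences of spectra do not exist and the Mittag--Leffler step cannot start. A minor further correction: $\Lambda_{(M)}(\N)$ is isomorphic to the power series space $\Lambda_\infty(\alpha)$ with $\alpha_p=p$ (via $c_p\mapsto c_p/(p!M_p)$), not to the one with exponents $\log(p!M_p)$; this slip is harmless, since Theorem \ref{splitting} only requires the quotient to be some power series space of infinite type.
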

 \begin{remark}
 We are mainly interested in the equivalences of $(i)$--$(iii)$ in Theorem \ref{main} (cf.\ Theorem \ref{intro}). However,  the spaces $\mathscr{S}^{-}_{(M)}(S_\gamma)$ play an essential role in our proof of Theorem \ref{main}; see  Remark \ref{explanation} below for more details.
 \end{remark}
  The proof of Theorem \ref{main} is given in Section \ref{sect-main}.  In preparation for it, we  present various auxiliary results in the next two sections.


\section{Two functional analytic tools}\label{sect-aux1}
\subsection{The Mittag-Leffler procedure}\label{sect-ML} We explain the abstract Mittag-Leffler lemma for Fr\'echet spaces and state it in terms of the derived projective limit functor; we refer to the book \cite{Wengenroth} for more information on this subject. 

A \emph{projective spectrum}  $\mathscr{X} = (X_n, \varrho_{n+1}^n)_{n \in \N}$ consists of vector spaces $X_n$  and linear
maps $\varrho^n_{n+1}: X_{n+1} \rightarrow X_n$ (called the spectral maps). We define $\varrho^n_n  = \operatorname{id}_{X_n}$ and $\varrho^n_{m} = \varrho_{n+1}^n \circ \cdots \circ \varrho^{m-1}_{m} : X_m \rightarrow X_n$ for $n,m \in \N$ with $m > n$. Given two projective spectra $\mathscr{X} = (X_n, \varrho_{n+1}^n)_{n \in \N}$  and $\mathscr{Y} = (Y_n, \sigma_{n+1}^n)_{n \in \N}$, a \emph{morphism} $T = (T_n)_{n \in \N}: \mathscr{X} \rightarrow \mathscr{Y}
$ consists of linear maps $T_n : X_n \rightarrow Y_n$ such that $T_n \circ \varrho_{n+1}^n = \sigma_{n+1}^n \circ T_{n+1}$ for all $n \in \N$. We set
$$
\operatorname{Proj} \mathscr{X} = \{(x_n)_{n \in \N} \in \prod_{n \in \N} X_n \, | \,  x_n = \varrho^n_{n+1}(x_{n+1}),  \forall n \in \N\}
$$
and
$$
\operatorname{Proj} T : \operatorname{Proj} \mathscr{X} \rightarrow \operatorname{Proj} \mathscr{Y}, \, (x_n)_{n \in \N} \mapsto (T_n(x_n))_{n \in \N}.
$$
For each $n \in \N$ we also write $\varrho^n : \operatorname{Proj} \mathscr{X} \rightarrow X_n, \, (x_j)_{j \in \N} \mapsto x_n$. Furthermore, we define
$$ 
\operatorname{Proj}^1 \mathscr{X} =   \prod_{n \in \N}X_n / B(\mathscr{X}),
$$
where
$$B(\mathscr{X}) = \{(x_n)_{n \in \N} \in \prod_{n \in \N} X_n \, | \, \exists \, (u_n)_{n \in \N} \in \prod_{n \in \N} X_n \, : \,  x_n = u_n - \varrho^n_{n+1}(u_{n+1}), \forall n \in \N\}.
$$

Let  
 \begin{center}
\begin{tikzpicture}
  \matrix (m) [matrix of math nodes, row sep=2em, column sep=2.5em]
    {0 & \mathscr{X} & \mathscr{Y} & \mathscr{Z} & 0 \\
   };
  { [start chain] \chainin (m-1-1);
\chainin (m-1-2);
\chainin (m-1-3) [join={node[above,labeled] {S}}];
\chainin (m-1-4)[join={node[above,labeled] {T}}];
\chainin (m-1-5)[join={node[above,labeled] {}}];
}
\end{tikzpicture}
\end{center}
be an exact sequence of projective spectra, i.e., a commutative diagram
\begin{center}
\begin{tikzpicture}
  \matrix (m) [matrix of math nodes, row sep=2em, column sep=2.5em]
    {  
        0 &  X_1 & Y_1  & Z_1 & 0 \\
        0 &  X_{2} & Y_{2}  & Z_{2} & 0 \\
         & \vdots & \vdots  & \vdots &  \\
          };
  
  { [start chain] \chainin (m-1-1);
  \chainin (m-1-2);
\chainin (m-1-3)[join={node[above,labeled] {S_1}}];
\chainin (m-1-4)[join={node[above,labeled] {T_1}}];
\chainin (m-1-5);
; }
  { [start chain] \chainin (m-2-1);
  \chainin (m-2-2);
\chainin (m-2-3)[join={node[above,labeled] {S_{2}}}];
\chainin (m-2-4) [join={node[above,labeled] {T_{2}}}];
\chainin (m-2-5);
; }
  { [start chain] \chainin (m-3-2);
  \chainin (m-2-2);
    \chainin (m-1-2);
; }
  { [start chain] \chainin (m-3-3);
  \chainin (m-2-3);
    \chainin (m-1-3);
; }
  { [start chain] \chainin (m-3-4);
  \chainin (m-2-4);
    \chainin (m-1-4);
; }
  
\end{tikzpicture}
\end{center}
where the vertical arrows stand for the spectral maps and each row is a short exact sequence of vector spaces. If $\operatorname{Proj}^1 \mathscr{X} = 0$, then 
\begin{center}
\begin{tikzpicture}
  \matrix (m) [matrix of math nodes, row sep=2em, column sep=2.5em]
    {0 & \operatorname{Proj}\mathscr{X} & \operatorname{Proj}\mathscr{Y} & \operatorname{Proj}\mathscr{Z} & 0 \\
   };
  { [start chain] \chainin (m-1-1);
\chainin (m-1-2);
\chainin (m-1-3) [join={node[above,labeled] {\operatorname{Proj} T}}];
\chainin (m-1-4)[join={node[above,labeled] {\operatorname{Proj} S }}];
\chainin (m-1-5)[join={node[above,labeled] {}}];
}

\end{tikzpicture}
\end{center}is again exact \cite[Cor.\ 3.1.5]{Wengenroth}.

Two projective spectra  $\mathscr{X} = (X_n, \varrho_{n+1}^n)_{n \in \N}$ and  $\mathscr{Y} = (Y_n, \sigma_{n+1}^n)_{n \in \N}$ are said to be \emph{equivalent} if there are increasing sequences $(k_n)_{n \in \N}$ and $(l_n)_{n \in \N}$ of natural numbers with $n \leq l_n \leq k_n \leq l_{n+1}$ and  linear maps $T_n : X_{k_n} \rightarrow Y_{l_n}$ and $S_n : Y_{l_{n+1}} \rightarrow X_{k_n}$ such that
$S_n \circ T_{n+1} = \varrho^{k_n}_{k_{n+1}}$ and $T_n \circ S_{n} = \sigma^{l_n}_{l_{n+1}}$ for all $n \in \N$. In such a case, we have that  $\operatorname{Proj}^1 \mathscr{X} \cong \operatorname{Proj}^1 \mathscr{Y}$ \cite[Proposition 3.1.7]{Wengenroth}.  

 A \emph{projective spectrum of Fr\'echet spaces}  $\mathscr{X} = (X_n, \varrho_{n+1}^n)_{n \in \N}$ is a projective spectrum consisting of Fr\'echet spaces $X_n$  and continuous
 linear spectral maps $\varrho^n_{n+1}: X_{n+1} \rightarrow X_n$. Given a Fr\'echet space $X$, we denote by $\mathscr{U}_0(X)$ its collection of neighbourhoods of $0$. 
 For projective spectra of Fr\'echet spaces  the vanishing of the derived projective limit functor may be characterized as follows (this result is known as the abstract Mittag-Leffler lemma). 
\begin{theorem}\label{M-L} \cite[Theorem 3.2.8]{Wengenroth} Let $\mathscr{X} = (X_n, \varrho_{n+1}^n)_{n \in \N}$ be a projective spectrum of Fr\'echet spaces. Then, the following statements are equivalent:
\begin{itemize}
\item[$(i)$] $\operatorname{Proj}^1 \mathscr{X} = 0$.
\item[$(ii)$] $\forall n \in \N, \, U \in \mathscr{U}_0(X_n) \, \exists m \geq n \, \forall k \geq m \, : \, \varrho^n_m(X_m) \subset \varrho^n_k(X_k) + U$.
\item[$(iii)$] $\forall n \in \N, \, U \in \mathscr{U}_0(X_n) \, \exists m \geq n  \, : \, \varrho^n_m(X_m) \subset \varrho^n(\operatorname{Proj} \mathscr{X}) + U$. 
\end{itemize} 
\end{theorem}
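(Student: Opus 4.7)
The plan is to establish the cycle $(iii) \Rightarrow (ii)$, $(ii) \Rightarrow (iii)$, and $(iii) \Leftrightarrow (i)$. The first implication is immediate: for $k \geq n$, $\varrho^n(\operatorname{Proj}\mathscr{X}) \subset \varrho^n_k(X_k)$. The equivalence $(i) \Leftrightarrow (iii)$ can be read off the exact sequence
$$
0 \to \operatorname{Proj}\mathscr{X} \to \prod_{k} X_k \xrightarrow{\delta} \prod_{k} X_k \to \operatorname{Proj}^1\mathscr{X} \to 0,
$$
where $\delta((x_k)) = (x_k - \varrho^k_{k+1}(x_{k+1}))$: condition $(iii)$ says that the image of $\delta$ contains each $\varrho^n_m(X_m)$ up to an arbitrarily small error, and a telescoping summation in the Fr\'echet metric upgrades this approximate surjectivity to genuine surjectivity of $\delta$, i.e., to $\operatorname{Proj}^1 \mathscr{X} = 0$.

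The core of the proof is $(ii) \Rightarrow (iii)$, which is where the Mittag-Leffler construction enters. Fix $n \in \N$, $U \in \mathscr{U}_0(X_n)$, and pick a decreasing countable basis $(V^{(j)})_{j \in \N}$ of balanced neighborhoods of $0$ in $X_n$ with $V^{(0)} \subset U$ and $V^{(j+1)} + V^{(j+1)} \subset V^{(j)}$. Given $x \in X_m$ with $m \geq n$, I would apply $(ii)$ iteratively at higher and higher levels to produce an increasing sequence of indices $m = m_0 < m_1 < \cdots$ and approximate lifts $u_j \in X_{m_j}$ such that $\varrho^{m_{j-1}}_{m_j}(u_j)$ is close to $u_{j-1}$ modulo a neighborhood of $0$ in $X_{m_{j-1}}$ that maps under $\varrho^n_{m_{j-1}}$ into $V^{(j)}$. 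By absolute summability of the errors (ensured by the choice $V^{(j+1)} + V^{(j+1)} \subset V^{(j)}$) and completeness of each $X_{m_j}$, the approximate lifts converge to an exactly compatible family $(\tilde{u}_j)$; the latter assembles to an element of $\operatorname{Proj}\mathscr{X}$ whose image in $X_n$ lies within $U$ of $\varrho^n_m(x)$.

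The main obstacle is the simultaneous bookkeeping across all levels: one must choose the indices $m_j$ and the correcting neighborhoods in each $X_{m_j}$ so that $(ii)$ can be applied continuously through the spectral maps and so that the Mittag-Leffler summation succeeds in every $X_{m_j}$ at once, not just in $X_n$. Metrizability of the Fr\'echet spaces is essential here, since it permits a single sequence of seminorms to control all approximations at a given level; once the construction is in place, the remaining verifications that $(\tilde{u}_j)$ is a genuine projective element and that the approximation holds are routine consequences of continuity of the spectral maps.
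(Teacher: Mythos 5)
Your cycle is incomplete: the implication $(i)\Rightarrow(iii)$ (equivalently $(i)\Rightarrow(ii)$) is never argued, and it cannot simply be ``read off'' the four-term exact sequence. Knowing that $\delta\colon \prod_k X_k \to \prod_k X_k$, $\delta((x_k))_n = x_n - \varrho^n_{n+1}(x_{n+1})$, is surjective does not by itself tell you that, for given $n$ and $U$, some $\varrho^n_m(X_m)$ is contained in $\varrho^n(\operatorname{Proj}\mathscr{X})+U$; this is the non-formal half of the theorem, and it is false for spectra of general (non-metrizable) locally convex spaces, so no purely homological reading of the sequence can yield it. The standard argument uses the open mapping theorem (or a Baire category argument) on the Fr\'echet space $\prod_k X_k$: if $\delta$ is surjective it is open, so for $U\in\mathscr{U}_0(X_n)$ the set $\delta(\{u : u_n\in U\})$ contains a basic $0$-neighbourhood constraining only coordinates $k<m$ for some $m>n$; for $x\in X_m$ the element $e(x)$ with coordinates $\varrho^k_m(x)$ for $k\le m$ and $0$ above satisfies $\delta(e(x))=(0,\dots,0,x,0,\dots)$ (with $x$ in slot $m$), hence lies in that basic neighbourhood, so $\delta(u)=\delta(e(x))$ for some $u$ with $u_n\in U$; then $e(x)-u\in\ker\delta=\operatorname{Proj}\mathscr{X}$ and $\varrho^n_m(x)=\varrho^n(e(x)-u)+u_n\in\varrho^n(\operatorname{Proj}\mathscr{X})+U$. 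Without some such step your argument only proves $(ii)\Leftrightarrow(iii)\Rightarrow(i)$.

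Two further remarks. First, your paraphrase of $(iii)$ as ``the image of $\delta$ contains each $\varrho^n_m(X_m)$ up to an arbitrarily small error'' is not what $(iii)$ says: $(iii)$ approximates $\varrho^n_m(X_m)$ by $\varrho^n(\ker\delta)$, not by $\operatorname{im}\delta$, and the two even live in different spaces. The telescoping proof of $(iii)\Rightarrow(i)$ does work, but it should be phrased as correcting the (generally divergent) partial sums $\sum_{k=n}^{N}\varrho^n_k(y_k)$ by elements of $\operatorname{Proj}\mathscr{X}$ chosen via $(iii)$ at a sufficiently high level, with neighbourhoods so small that their images at all lower levels are summable; this is the same bookkeeping issue you correctly flag in your $(ii)\Rightarrow(iii)$ sketch, where the errors must be controlled at every level $m_j$, not only in $X_n$. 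Second, the paper itself gives no proof of this statement (it is quoted from Wengenroth's book), so the relevant comparison is with the standard proof there, which your outline follows except for the missing direction above.
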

In what follows we shall not write explicitly the spectral maps of projective spectra as they will be clear from the context. 
\subsection{A splitting theorem} We present a particular version of the $(DN)$-$(\Omega)$ splitting theorem for Fr\'echet spaces; see \cite{Vogt} for more information.

A Fr\'echet space $X$ with a fundamental increasing system $(U_n)_{n \in \N}$ of neighborhoods of zero is said to satisfy property $(\Omega)$ if
$$
\forall n \in \N \, \exists m \geq n  \, \forall k \geq m  \, \exists C,L > 0 \, \forall \varepsilon > 0 \, : U_m \subset  C\varepsilon^{-L}U_k + \varepsilon U_n.
$$
Let $\alpha = (\alpha_j)_{j \in \N}$ be an increasing sequence of positive real numbers with $\lim_{n \to \infty}\alpha_n =  \infty$.  We define the \emph{power series space $\Lambda_\infty(\alpha)$ of infinite type} as the Fr\'echet space consisting of all $(c_j)_{j \in \N} \in \C^\N$ such that
$$
\sum_{j = 0}^\infty |c_j| e^{n \alpha_j} < \infty, \qquad \forall n \in \N.
$$
We then have:
\begin{theorem} \cite[Theorem 5.1]{Vogt}\label{splitting} Let
\begin{center}
\begin{tikzpicture}
  \matrix (m) [matrix of math nodes, row sep=2em, column sep=2.5em]
    {0 & X & Y & Z & 0 \\
   };
  { [start chain] \chainin (m-1-1);
\chainin (m-1-2);
\chainin (m-1-3) [join={node[above,labeled]{ S}}];
\chainin (m-1-4)[join={node[above,labeled] { T}}];
\chainin (m-1-5)[join={node[above,labeled] {}}];
}
\end{tikzpicture}
\end{center}
be a short exact sequence of Fr\'echet spaces with continuous linear maps. If $X$ satisfies $(\Omega)$ and $Z$ is isomorphic to a power series space of infinite type, then $T$ has a continuous linear right inverse. 
\end{theorem}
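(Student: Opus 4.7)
The plan is to deduce this particular form of the splitting theorem from the abstract $(DN)$-$(\Omega)$ splitting theorem of Vogt, which asserts that every short exact sequence $0 \to X \to Y \to Z \to 0$ of Fr\'echet spaces with $X$ enjoying property $(\Omega)$ and $Z$ enjoying property $(DN)$ admits a continuous linear splitting. Since $X \in (\Omega)$ is already in the hypotheses, the only task left is to verify that every power series space of infinite type $\Lambda_\infty(\alpha)$, and hence $Z$, satisfies property $(DN)$.

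Recall that a Fr\'echet space with an increasing fundamental system of seminorms $(\|\cdot\|_n)_{n \in \N}$ is said to satisfy $(DN)$ if there exists a continuous seminorm $\|\cdot\|$ (a dominating norm) such that for every $m \in \N$ there exist $k \geq m$, $C > 0$ and $\theta \in (0,1)$ with
$$ \|x\|_m \leq C \|x\|^{1-\theta} \|x\|_k^\theta, \qquad x \in X. $$
On $\Lambda_\infty(\alpha)$ I would replace the $\ell^1$-type seminorms by the equivalent $\ell^\infty$-type ones $\|(c_j)\|_n = \sup_j |c_j| e^{n\alpha_j}$ and take $\|\cdot\| = \|\cdot\|_0$ as the dominating norm. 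For any $k > m$, setting $\theta = m/k$, the identity $m\alpha_j = (1-\theta) \cdot 0 + \theta \cdot k\alpha_j$ gives pointwise in $j$
$$ |c_j| e^{m\alpha_j} = \bigl(|c_j|\bigr)^{1-\theta}\bigl(|c_j| e^{k\alpha_j}\bigr)^{\theta} \leq \|(c_j)\|_0^{1-\theta}\|(c_j)\|_k^\theta, $$
and taking the supremum in $j$ yields $\|(c_j)\|_m \leq \|(c_j)\|_0^{1-\theta}\|(c_j)\|_k^\theta$, i.e. property $(DN)$.

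With this ingredient in hand, the abstract $(DN)$-$(\Omega)$ splitting theorem applies to $0 \to X \to Y \to Z \to 0$: since $X \in (\Omega)$ and $Z \cong \Lambda_\infty(\alpha) \in (DN)$, the sequence splits, so $T$ has a continuous linear right inverse. The genuine depth of the statement lies not in the elementary $(DN)$ verification above, but in the abstract splitting theorem itself; its proof, which proceeds by applying the Mittag-Leffler lemma of the previous subsection to a suitable exact sequence of spaces of continuous linear maps between Fr\'echet spaces, is the main obstacle and is carried out in detail in \cite{Vogt}.
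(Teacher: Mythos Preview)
The paper does not prove this statement; it is quoted verbatim as \cite[Theorem~5.1]{Vogt} and used as a black box. Your proposal is a correct and standard way to view it: one invokes the general $(DN)$--$(\Omega)$ splitting theorem and checks that $\Lambda_\infty(\alpha)$ satisfies $(DN)$, which your interpolation computation with $\theta = m/k$ does cleanly.

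One small caveat worth noting: the fully general form of the $(DN)$--$(\Omega)$ splitting theorem usually carries an extra structural hypothesis (for instance nuclearity of one of the spaces, or that one of them is a K\"othe space), so the sentence ``every short exact sequence with $X\in(\Omega)$ and $Z\in(DN)$ splits'' is slightly too strong as stated. This is harmless here, since the result the paper actually cites, \cite[Theorem~5.1]{Vogt}, is formulated precisely for $Z\cong\Lambda_\infty(\alpha)$ and needs no such additional assumption; but if you want to route through the general theorem, you should either add the nuclearity hypothesis or simply point to the version tailored to power series spaces. Apart from this cosmetic point, your argument matches what the paper does---namely, defer to \cite{Vogt}.
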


\section{Spaces of holomorphic functions with rapid decay on strips}\label{sect-aux2}
In this section we introduce a class of weighted Fr\'echet spaces of  holomorphic functions with rapid decay on strips and give two properties of these spaces, which were essentially shown in \cite{Debrouwere}. Moreover,  we establish a connection between these spaces and the spaces $\mathscr{S}^{-}_{(M)}(S_\gamma)$. We point out that the notation employed here is different from the one used in \cite{Debrouwere}.

A continuous increasing function $\nu: [0,\infty) \rightarrow [0,\infty)$ is called a \emph{weight function} if $\nu(0) = 0$ and $\log t = o(\nu(t))$. 
Let $h > 0$. A weight function $\nu$ is said to satisfy condition $(Q_h)$ if there are $C,L >0$ such that
$$
\int^\infty_0\nu(t+s)  e^{-s/ h }ds \leq L\nu(t) + C, \qquad t \geq 0.
$$
We set
$$
h(\nu) := \sup \{ h > 0 \, | \, \  \nu \mbox{ satisfies } (Q_h) \}.
$$ 
 
We define the horizontal strips
$$
V_h := \{ z \in \C \, | \,  | \operatorname{Im} z | < \frac{ h \pi}{2} \}, \qquad h > 0.
$$ 
 Let $\nu$ be a weight function. For $h > 0$ we define $\mathscr{U}_{(\nu)}(V_h)$ as the Fr\'echet space consisting of all $f \in \mathscr{O}(V_h)$  such that
$$
\sup_{z \in V_h} |f(z)|e^{n\nu(| \operatorname{Re} z |)} < \infty, \qquad \forall n \in \N.
$$
The corresponding non-uniform spaces are defined as
$$
\mathscr{U}^{-}_{(\nu)}(V_h) := \varprojlim_{k \to h^{-}}\mathscr{U}_{(\nu)}(V_k).
$$
We  need the following two facts about these spaces.
\begin{theorem}\label{thmstrips}
Let $\nu$ be a weight function and let $0 <h \leq h(\nu)$. Then,
\begin{itemize}
\item[$(i)$] Let $(h_n)_{n \in \N}$ be an increasing sequence of positive real numbers with $h_n < h$ such that $\lim_{n \to \infty} h_n = h$. Consider the projective spectrum $\mathscr{X} = ( \mathscr{U}_{(\nu)}(V_{h_n}))_{n \in \N}$ (with restriction as spectral maps). Then, $\operatorname{Proj}^1 \mathscr{X} = 0$.
\item[$(ii)$] $\mathscr{U}^-_{(\nu)}(V_{h})$ satisfies $(\Omega)$.
\end{itemize}
\end{theorem}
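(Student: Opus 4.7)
The plan is to reduce both parts of Theorem \ref{thmstrips} to results proved in \cite{Debrouwere}, the main point of this section being the translation of notation: in that paper the weighted holomorphic spaces on strips are parametrised differently, and one first needs to check that the Fr\'echet spaces $\mathscr{U}_{(\nu)}(V_h)$ and $\mathscr{U}^-_{(\nu)}(V_h)$ defined here agree canonically with the corresponding objects there. Once this dictionary is in place, both parts follow from the corresponding theorems of \cite{Debrouwere}.

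For part $(i)$, I would apply the abstract Mittag-Leffler lemma (Theorem \ref{M-L}) in the form of condition $(iii)$: it suffices to show that for each $n \in \N$ and each zero-neighbourhood $U$ of $\mathscr{U}_{(\nu)}(V_{h_n})$ there exists $m \geq n$ such that every $f \in \mathscr{U}_{(\nu)}(V_{h_m})$ can be approximated on $V_{h_n}$, modulo $U$, by some $g \in \mathscr{U}^-_{(\nu)}(V_h) = \operatorname{Proj} \mathscr{X}$. The estimates in \cite{Debrouwere} provide exactly this approximation: given $f$, one constructs $g$ by a cut-off procedure (carried out on the Fourier side) that produces a function extending holomorphically to every strip $V_{h_k}$, while the remainder $f-g$ decays in the prescribed weighted norm on $V_{h_n}$. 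The condition $(Q_h)$ is what quantifies the weight loss incurred in moving from $V_{h_m}$ to $V_{h_n}$ and makes the cut-off admissible.

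For part $(ii)$, I would verify the $(\Omega)$ property directly with the fundamental increasing system
\[
U_N = \Bigl\{ f \in \mathscr{U}^-_{(\nu)}(V_h) \, : \, \sup_{z \in V_{h_N}} |f(z)| e^{N\nu(|\operatorname{Re} z|)} \leq 1 \Bigr\}
\]
with $h_N \nearrow h$. The required inclusion $U_M \subset C\varepsilon^{-L}U_K + \varepsilon U_N$ is obtained by splitting each $f \in U_M$ additively into a piece holomorphic on a strip strictly larger than $V_{h_M}$ (the analogue of the "low-frequency" part) and a remainder whose weighted supremum on $V_{h_N}$ is bounded by $\varepsilon$, the splitting parameter being chosen as a suitable power of $\varepsilon$. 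This splitting is produced by the same weighted-holomorphic machinery from \cite{Debrouwere}, and $(Q_h)$ again governs the interplay between the weight $\nu$ and the geometry of the nested strips.

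The principal obstacle is not in the reductions via Theorem \ref{M-L} and the $(\Omega)$ definition, which are mechanical once the right statements are isolated, but rather in the underlying weighted splitting results of \cite{Debrouwere}. Producing a genuinely \emph{additive} decomposition $f = g + r$ in a weighted holomorphic class (as opposed to the multiplicative Phragm\'en-Lindel\"of bound that one gets for free) requires either a $\bar\partial$-argument with weighted $L^2$-estimates or a delicate Fourier-side construction, and this is the step in which the hypothesis $(Q_h)$ on $\nu$ is essentially used.
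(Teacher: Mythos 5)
Your proposal matches the paper's proof in essence: both parts are reductions to \cite[Proposition 3.1]{Debrouwere}, with $(i)$ obtained by verifying condition $(iii)$ of the abstract Mittag-Leffler lemma (Theorem \ref{M-L}) for the spectrum on the strips $V_{h_n}$, and $(ii)$ being exactly the $(\Omega)$ statement proved there. The only cosmetic difference is that the paper first passes to an equivalent spectrum of Banach spaces $\mathscr{U}_{\nu,n}(V_{h_n})$, for which the proof in \cite{Debrouwere} yields condition $(ii)$ of Theorem \ref{M-L}, and then transfers condition $(iii)$ back to the Fr\'echet spectrum; the heavy lifting (your ``additive splitting'' step) indeed lives entirely in \cite{Debrouwere}.
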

\begin{proof}
$(i)$ We show that $\mathscr{X}$ satisfies condition $(iii)$ from Theorem \ref{M-L}.  For $l > 0$ and $n \in \N$ we define $\mathscr{U}_{\nu,n}(V_l)$ as the Banach space consisting of all $f \in \mathscr{O}(V_l)$  such that
$$
\sup_{z \in V_l} |f(z)|e^{n\nu(| \operatorname{Re} z |)} < \infty.
$$
Consider the projective spectrum $\mathscr{Y} = ( \mathscr{U}_{\nu,n}(V_{h_n}))_{n \in \N}$ (with restriction as spectral maps). Note that $\operatorname{Proj} \mathscr{Y} =  \operatorname{Proj} \mathscr{X} = \mathscr{U}^-_{(\nu)}(V_{h})$. The proof of \cite[Proposition 3.1]{Debrouwere} (see the statement at the beginning of this proof) implies  that $\mathscr{Y}$ satisfies condition $(ii)$ from Theorem \ref{M-L}. Hence,  $\mathscr{Y}$ also satisfies condition $(iii)$ from this result, i.e.,
$$
\forall n \in \N, \, U \in \mathscr{U}_0(\mathscr{U}_{\nu,n}(V_{h_n})) \, \exists m \geq n \, : \, \mathscr{U}_{\nu,m}(V_{h_m}) \subset \mathscr{U}^-_{(\nu)}(V_{h}) + U.
$$
This implies that
$$
\forall n \in \N, \, U \in \mathscr{U}_0(\mathscr{U}_{(\nu)}(V_{h_n})) \, \exists m \geq n \, : \, \mathscr{U}_{(\nu)}(V_{h_m}) \subset \mathscr{U}^-_{(\nu)}(V_{h}) + U,
$$
which means that  $\mathscr{X}$ satisfies condition $(iii)$ from Theorem \ref{M-L}.

$(ii)$ This is shown in \cite[Proposition 3.1]{Debrouwere}.
\end{proof}
Next, we show how the spaces $\mathscr{S}^-_{(M)}(S_\gamma)$ and $\mathscr{U}^-_{(\nu)}(V_h)$ are related to each other. Let $M$ be  a weight sequence and let $\gamma > 0$. We define $\mathscr{S}^{0}_{(M)}(S_\gamma)$ and $\mathscr{S}^{-,0}_{(M)}(S_\gamma)$ as the kernel of the map $B: \mathscr{S}_{(M)}(S_\gamma) \rightarrow  \Lambda_{(M)}(\N)$ and $B: \mathscr{S}^-_{(M)}(S_\gamma) \rightarrow  \Lambda_{(M)}(\N)$, respectively.
\begin{lemma}\label{kernel}
Let $M$ be a strongly regular weight sequence and let $\gamma > 0$. Then, $f \in \mathscr{O}(S_\gamma)$ belongs to  $\mathscr{S}^{-,0}_{(M)}(S_\gamma)$ if and only if 
$$
\| f \|_{n,\delta} = \sup_{z \in S_\delta} |f(z)| e^{n(\omega_M(1/|z|) + \omega_M(|z|))} < \infty, \qquad \forall n \in \N, \delta < \gamma.
$$
Moreover, the topology of $\mathscr{S}^{-,0}_{(M)}(S_\gamma)$ is generated by the family of seminorms $\{ \| \, \cdot \, \|_{n,\delta} \, | \, n \in \N, \delta < \gamma \}$.
\end{lemma}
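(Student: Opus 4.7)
I will prove the lemma by establishing matching seminorm inequalities in both directions; these will simultaneously yield the set equality and identify the topology on the kernel as that generated by $\{\|\cdot\|_{n,\delta}\}$. For the forward direction, fix $f \in \mathscr{S}^{-,0}_{(M)}(S_\gamma)$, $\delta < \gamma$, and $m \in \N$. The strategy is to prove $|f(z)| \leq C_m e^{-m\omega_M(|z|)}$ and $|f(z)| \leq C_m e^{-m\omega_M(1/|z|)}$ separately, and combine them into $|f(z)| \leq C_m e^{-(m/2)(\omega_M(|z|) + \omega_M(1/|z|))}$. The first bound follows from the $p = 0$ case of the defining seminorm, giving $|f(z)| \leq C_n M_0 e^{-\omega_M(n|z|)}$, upgraded via the iterate $\omega_M(L^k t) \geq 2^k\omega_M(t) - C'$ of \eqref{assM2} with $n = L^k$ and $2^k \geq m$. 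The second uses the flatness $f^{(j)}(0) = 0$ for $j < p$: Taylor's formula along the radial segment $[0,z] \subset S_\delta$ gives
\[
f(z) = \frac{z^p}{(p-1)!}\int_0^1 (1-t)^{p-1} f^{(p)}(tz)\, dt,
\]
and inserting $|f^{(p)}(tz)| \leq C_n p!M_p n^{-p}$ (together with the trivial bound $e^{-\omega_M(nt|z|)} \leq 1$) yields $|f(z)| \leq C_n |z|^p M_p n^{-p}$ for every $p \geq 1$; taking the infimum via the identity $\inf_p M_p s^{-p} = M_0 e^{-\omega_M(s)}$ produces $|f(z)| \leq C_n M_0 e^{-\omega_M(n/|z|)}$, which becomes the desired estimate by the same application of \eqref{assM2}.

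For the reverse direction, suppose $f \in \mathscr{O}(S_\gamma)$ satisfies $\|f\|_{n,\delta} < \infty$ for all $n$ and $\delta < \gamma$. Fix $\delta < \delta' < \gamma$ and choose $c = c(\delta,\delta') \in (0,1)$ so that $\overline{B(z,c|z|)} \subset S_{\delta'}$ for every $z \in S_\delta$. Cauchy's formula gives $|f^{(p)}(z)| \leq p!(c|z|)^{-p}\sup_{|w-z|=c|z|}|f(w)|$; combining this with the hypothesis applied on $S_{\delta'}$ at a free index $N$, with the monotonicity of $\omega_M$ on $(1-c)|z|\leq|w|\leq(1+c)|z|$, and with the elementary inequality $n^p/((c|z|)^p M_p) \leq e^{\omega_M(n/(c|z|))}/M_0$ (from the definition of $\omega_M$), one obtains
\[
\frac{n^p|f^{(p)}(z)|e^{\omega_M(n|z|)}}{p!M_p} \leq C\, e^{\omega_M(n/(c|z|)) + \omega_M(n|z|) - N\omega_M(1/((1+c)|z|)) - N\omega_M((1-c)|z|)}.
\]
Because $M$ is strongly regular, $\gamma(M) > 0$, and \eqref{assgamma} iterates to $\omega_M(At) \leq L^k\omega_M(t) + C$ whenever $2^k \geq A$; applied with $A = n(1+c)/c$ and $A = n/(1-c)$, choosing $N = L^k$ sufficiently large makes the exponent uniformly bounded, giving $f \in \mathscr{S}_{(M)}(S_\delta)$. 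The kernel condition $f^{(p)}(0) = 0$ is a byproduct: combining the same Cauchy estimate with the pointwise inequality $e^{-N\omega_M(s)} \leq (M_q/M_0)^N s^{-Nq}$ (again from the definition of $\omega_M$), one finds $|f^{(p)}(z)| = O(|z|^{Nq - p})$ as $z \to 0$ in $S_\delta$ for any $q$ with $Nq > p$, so the limit at the origin vanishes.

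The technical core is the reverse direction, where the parasitic terms $\omega_M(n/(c|z|))$ and $\omega_M(n|z|)$ must be absorbed into the $N$-fold decay coming from $\|f\|_{N,\delta'}$. This is precisely where $\gamma(M) > 0$ enters crucially, via the upper bound $\omega_M(At) \leq N\omega_M(t) + O(1)$ provided by \eqref{assgamma}; without it, there would be no uniform control of the ratio $\omega_M(At)/\omega_M(t)$. The mutual seminorm dominations established in both directions then give the equivalence of the two topologies on $\mathscr{S}^{-,0}_{(M)}(S_\gamma)$, after the standard re-indexing of $n$ and $\delta$ allowed by the projective limit structure.
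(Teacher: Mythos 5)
Your proof is correct and follows essentially the same route as the paper's (very condensed) argument: Taylor's formula with the flatness at $0$ and the Cauchy estimates give the equivalence with weights of the form $e^{\omega_M(n/|z|)+\omega_M(n|z|)}$, and the iterates of \eqref{assM2} and \eqref{assgamma} move the parameter $n$ in and out of the argument of $\omega_M$; you have simply merged these two steps and supplied the details the paper leaves implicit.
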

\begin{proof}
Taylor's theorem and the Cauchy estimates imply that  $f \in \mathscr{O}(S_\gamma)$ belongs to  $\mathscr{S}^{-,0}_{(M)}(S_\gamma)$ if and only if 
$$
| f |_{n, \delta} = \sup_{z \in S_\delta} |f(z)| e^{\omega_M(n/|z|) + \omega_M(n|z|)} < \infty, \qquad \forall n \in \N, \delta < \gamma,
$$
and that the topology of $\mathscr{S}^{-,0}_{(M)}(S_\gamma)$ is generated by the family of seminorms $\{ | \, \cdot \, |_{n,\delta} \, | \, n \in \N, \delta < \gamma \}$.
The result therefore follows from \eqref{assM2} and \eqref{assgamma}.
\end{proof}
Consider the biholomorphic map $\operatorname{Log}: \Sigma \rightarrow \C$ and note that $\operatorname{Log}(S_\gamma) =  V_\gamma$ for all $\gamma > 0$. 
\begin{lemma}\label{equivalence}
Let $M$ be a strongly regular weight sequence  and set $\nu_M(t) = \omega_M(e^t)$ for $t \geq 0$. Then,
\begin{itemize}
\item[$(i)$] $\gamma(M) = h(\nu_M)$.
\item[$(ii)$] For each $\gamma > 0$ the map
$$
\mathscr{U}^-_{(\nu_M)}(V_\gamma) \rightarrow \mathscr{S}^{-,0}_{(M)}(S_\gamma), \, f \mapsto f \circ \operatorname{Log}
$$
is a topological isomorphism.
\end{itemize}
\end{lemma}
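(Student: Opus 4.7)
The plan is to prove the two parts separately; both reduce to an elementary computation combined with one of the lemmas already stated.

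For part $(i)$, the strategy is a logarithmic change of variables transforming $(Q_h)$ for $\nu_M$ into the integral characterization from Lemma \ref{gammaM3}$(iii)$. Substituting $u = e^t$ and $r = e^s$ (so that $ds = dr/r$), the defining integral becomes
$$\int_0^\infty \nu_M(t+s) e^{-s/h}\, ds = \int_0^\infty \omega_M(e^{t+s}) e^{-s/h}\, ds = \int_1^\infty \frac{\omega_M(ur)}{r^{1+1/h}}\, dr,$$
and the right-hand side $L\nu_M(t) + C$ becomes $L\omega_M(u)+C$. Hence $\nu_M$ satisfies $(Q_h)$ precisely when
$$\int_1^\infty \frac{\omega_M(ur)}{r^{1+1/h}}\, dr \leq L\omega_M(u) + C$$
holds for all $u \geq 1$. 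Using monotonicity of $\omega_M$ and $\omega_M(0)=0$, the bound for $u \geq 1$ extends to $u \geq 0$ at the cost of enlarging $C$, so by Lemma \ref{gammaM3}$(iii)$ (with $\gamma = h$) this is equivalent to $h < \gamma(M)$. Taking suprema gives $h(\nu_M) = \gamma(M)$.

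For part $(ii)$, the idea is to push everything through the biholomorphism $\operatorname{Log}: S_\gamma \to V_\gamma$, whose inverse is the exponential, and to match the generating families of seminorms on both sides. The pullback $f \mapsto f \circ \operatorname{Log}$ is a vector-space bijection between $\mathscr{O}(V_\gamma)$ and $\mathscr{O}(S_\gamma)$. To identify the topologies, observe that for $w \in S_\gamma$ and $z = \operatorname{Log}(w)$ one has $|\Re z| = |\log|w||$, so by monotonicity of $\omega_M$
$$\nu_M(|\Re z|) = \omega_M(\max(|w|, 1/|w|)) = \max(\omega_M(|w|),\, \omega_M(1/|w|)),$$
and the elementary bound $\max(a,b) \leq a+b \leq 2\max(a,b)$ yields the two-sided estimate
$$\nu_M(|\Re z|) \leq \omega_M(|w|) + \omega_M(1/|w|) \leq 2\nu_M(|\Re z|).$$
Combined with the characterization of $\mathscr{S}^{-,0}_{(M)}(S_\gamma)$ in Lemma \ref{kernel}, this shows that the fundamental seminorms of $\mathscr{U}^-_{(\nu_M)}(V_\gamma)$ and of $\mathscr{S}^{-,0}_{(M)}(S_\gamma)$ are equivalent under $f \mapsto f \circ \operatorname{Log}$, proving this map is a topological isomorphism.

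The only delicate points are the bookkeeping of constants: in part $(i)$ one must justify the passage from $u \geq 1$ to $u \geq 0$ in the integral estimate, and in part $(ii)$ one must rescale the seminorm parameter $n$ by a factor of two when translating between the two families. Both are routine since Lemmas \ref{gammaM3} and \ref{kernel} do all of the genuine analytic work.
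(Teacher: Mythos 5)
Your proof is correct and follows the same route as the paper, whose own proof simply states that $(i)$ follows from Lemma \ref{gammaM3} and $(ii)$ from Lemma \ref{kernel}; your exponential change of variables turning $(Q_h)$ into the integral condition of Lemma \ref{gammaM3}$(iii)$, and your comparison $\nu_M(|\Ree z|)\leq \omega_M(|w|)+\omega_M(1/|w|)\leq 2\nu_M(|\Ree z|)$ under $z=\operatorname{Log}(w)$, are exactly the computations the paper leaves implicit.
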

\begin{proof}$(i)$ This follows from Lemma \ref{gammaM3}.

$(ii)$ This follows from Lemma \ref{kernel}.
\end{proof}
Lemma \ref{equivalence} enables us to give an analogue of Theorem \ref{thmstrips} for the spaces $\mathscr{S}^{0}_{(M)}(S_\gamma)$ and $\mathscr{S}^{-,0}_{(M)}(S_\gamma)$.
\begin{corollary}\label{proj-omega}
Let $M$ be a strongly regular weight sequence and let $0 < \gamma \leq \gamma(M)$. Then,
\begin{itemize}
\item[$(i)$] Let $(\gamma_n)_{n \in \N}$ be an increasing sequence of positive real numbers with $\gamma_n < \gamma$ such that $\lim_{n \to \infty} \gamma_n = \gamma$. Consider the projective spectrum $\mathscr{X} = (\mathscr{S}^{0}_{(M)}(S_{\gamma_n}))_{n \in \N}$. Then, $\operatorname{Proj}^1 \mathscr{X} = 0$.
\item[$(ii)$] $\mathscr{S}^{-,0}_{(M)}(S_\gamma)$ satisfies $(\Omega)$.
\end{itemize}
\end{corollary}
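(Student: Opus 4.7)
The plan is to reduce both parts to the corresponding strip results of Theorem \ref{thmstrips}, via the topological isomorphism of Lemma \ref{equivalence}(ii) and the identity $\gamma(M) = h(\nu_M)$ of Lemma \ref{equivalence}(i). Under the standing hypothesis $\gamma \leq \gamma(M)$, the strip parameter $h = \gamma$ satisfies exactly the assumption $0 < h \leq h(\nu_M)$ of Theorem \ref{thmstrips}. Part $(ii)$ is then immediate: Lemma \ref{equivalence}(ii) gives a topological isomorphism $\mathscr{S}^{-,0}_{(M)}(S_\gamma) \cong \mathscr{U}^-_{(\nu_M)}(V_\gamma)$, the target satisfies $(\Omega)$ by Theorem \ref{thmstrips}(ii), and property $(\Omega)$ is manifestly invariant under topological isomorphism.

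For part $(i)$, my plan is to compare the given spectrum $\mathscr{X} = (\mathscr{S}^{0}_{(M)}(S_{\gamma_n}))_n$ with the strip spectrum $\mathscr{Z} = (\mathscr{U}_{(\nu_M)}(V_{\gamma_n}))_n$ through two auxiliary spectra, $\mathscr{Y} = (\mathscr{S}^{-,0}_{(M)}(S_{\gamma_n}))_n$ and $\mathscr{Y}' = (\mathscr{U}^{-}_{(\nu_M)}(V_{\gamma_n}))_n$. I would first show that $\mathscr{X}$ and $\mathscr{Y}$ are equivalent in the sense used in Section \ref{sect-ML}, taking $k_n = l_n = n$ and using, as comparison maps, the two continuous inclusions $\mathscr{S}^{0}_{(M)}(S_{\gamma_n}) \hookrightarrow \mathscr{S}^{-,0}_{(M)}(S_{\gamma_n})$ (the defining seminorms of the latter run over subsectors of $S_{\gamma_n}$ and hence are dominated by uniform bounds on $S_{\gamma_n}$) and $\mathscr{S}^{-,0}_{(M)}(S_{\gamma_{n+1}}) \hookrightarrow \mathscr{S}^{0}_{(M)}(S_{\gamma_n})$ (since $\gamma_n < \gamma_{n+1}$, uniform control on $S_{\gamma_n}$ is literally one of the seminorms defining $\mathscr{S}^{-,0}_{(M)}(S_{\gamma_{n+1}})$). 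The same inclusion sandwich, transcribed to strips, establishes the equivalence of $\mathscr{Z}$ and $\mathscr{Y}'$. Finally, the termwise application of Lemma \ref{equivalence}(ii) yields an isomorphism of projective spectra $\mathscr{Y} \cong \mathscr{Y}'$: because $\operatorname{Log}$ maps $S_{\gamma_n}$ onto $V_{\gamma_n}$, the family $f \mapsto f \circ \operatorname{Log}$ intertwines the restriction spectral maps. Invoking the invariance of $\operatorname{Proj}^1$ under equivalence and under isomorphism of spectra, one obtains $\operatorname{Proj}^1 \mathscr{X} = \operatorname{Proj}^1 \mathscr{Z}$, and the right-hand side vanishes by Theorem \ref{thmstrips}(i).

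I do not anticipate a substantive obstacle: the composition identities $S_n \circ T_{n+1} = \varrho^n_{n+1}$ and $T_n \circ S_n = \sigma^n_{n+1}$ required by the definition of equivalence hold automatically here because every map in sight is a restriction or the fixed biholomorphism $\operatorname{Log}$. The deep input is wholly contained in Theorem \ref{thmstrips} (and hence in the results imported from \cite{Debrouwere}); this corollary is essentially a translation exercise along the dictionary established by Lemmas \ref{kernel} and \ref{equivalence}.
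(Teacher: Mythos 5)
Your proposal is correct and follows essentially the same route as the paper: part $(ii)$ is the transfer of $(\Omega)$ through the isomorphism of Lemma \ref{equivalence}$(ii)$ together with Theorem \ref{thmstrips}$(ii)$, and part $(i)$ is proved by the same chain of spectrum equivalences (sector kernels $\leftrightarrow$ non-uniform kernels $\leftrightarrow$ strip spaces via $\operatorname{Log}$) reducing to Theorem \ref{thmstrips}$(i)$ with $h=\gamma\leq h(\nu_M)=\gamma(M)$. You merely make explicit the inclusion--restriction maps witnessing the equivalences, which the paper leaves as ``clear''.
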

\begin{proof}
$(i)$ Consider the projective spectra 
$$\mathscr{Y} = (\mathscr{S}^{-,0}_{(M)}(S_{\gamma_n}))_{n \in \N}, \qquad  \mathscr{Z} = (\mathscr{U}^{0}_{(\nu_M)}(V_{\gamma_n}))_{n \in \N}.
$$
 It is clear that $\mathscr{X}$ and $\mathscr{Y}$ are equivalent, while Lemma \ref{equivalence}$(ii)$ implies that $\mathscr{Y}$ and $\mathscr{Z}$ are equivalent. Hence, by Theorem \ref{thmstrips}$(i)$ and Lemma \ref{equivalence}$(i)$, $\operatorname{Proj}^1\mathscr{X} \cong \operatorname{Proj}^1\mathscr{Y} \cong \operatorname{Proj}^1\mathscr{Z} = 0$ (cf.\ Subsection \ref{sect-ML}).

$(ii)$ This follows from Theorem \ref{thmstrips}$(ii)$ and Lemma \ref{equivalence}.
\end{proof}

\begin{remark}\label{explanation}
Corollary \ref{proj-omega} is the reason why we also consider the spaces $\mathscr{S}^{-}_{(M)}(S_\gamma)$ in Theorem \ref{main}. We do not know whether a result similar to Corollary \ref{proj-omega} holds for the spaces $\mathscr{A}^{-}_{(M)}(S_\gamma)$.
\end{remark}
\section{Proof of the main result}\label{sect-main}
This section is devoted to the proof of Theorem \ref{main}. We start by introducing an auxiliary space of ultraholomorphic functions. Let $M$ be a weight sequence and let $\gamma > 0$. A function $f \in \mathscr{O}(S_\gamma)$ is said to admit the formal power series $\sum_{p= 0}^\infty a_p z^p$, $a_p \in \C$, as its \emph{uniform (M)-asymptotic expansion in $S_\gamma$} if for all $L > 0$ 
$$
 \sup_{p \in \N} \sup_{z \in S_\gamma} \frac{L^p}{M_p|z|^p} \left | f(z) - \sum_{q=0}^{p-1} a_q z^q \right| < \infty.
$$
We denote by $\widetilde{\mathscr{A}}_{(M)}(S_\gamma)$  the space consisting of all $f \in \mathscr{O}(S_\gamma)$ that admit a uniform $(M)$-asymptotic expansion  in $S_\gamma$.
Taylor's theorem and the Cauchy estimates imply that
$$
\mathscr{A}_{(M)}(S_\gamma) \subset \widetilde{\mathscr{A}}_{(M)}(S_\gamma) \subset \mathscr{A}^-_{(M)}(S_{\gamma})
$$
and that $a_p =f^{(p)}(0)/p!$, $p \in \N$, for all $f \in \widetilde{\mathscr{A}}_{(M)}(S_\gamma)$, where $\sum_{p= 0}^\infty a_p z^p$ is a uniform (M)-asymptotic expansion of $f$ in $S_\gamma$. In particular, the coefficients $a_p$ are unique.   Our interest in the spaces  $\widetilde{\mathscr{A}}_{(M)}(S_\gamma)$ stems from the following observation (cf.\ the proof of \cite[Theorem 4.17]{JG-S-S}).
\begin{lemma} \label{ramification}
Let $M$ be a strongly regular weight sequence, let $n \in \Z_+$, and let $\gamma > 0$. Suppose that $g \in \widetilde{\mathscr{A}}_{(M^{1/n})}(S_{\gamma/n})$ is such that $g^{(p)}(0) = 0$ for all $p \in \N \backslash n \N$. Define $f(z) = g(z^{1/n})$ for $z \in S_\gamma$. Then, $f \in \widetilde{\mathscr{A}}_{(M)}(S_{\gamma})$ and $f^{(p)}(0)/p! = g^{(pn)}(0)/(pn)!$ for all $p \in \N$.
\end{lemma}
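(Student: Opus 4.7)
The plan is to verify directly that $f$ admits the uniform $(M)$-asymptotic expansion $\sum_{p=0}^\infty a_p z^p$ in $S_\gamma$, where $a_p := g^{(pn)}(0)/(pn)!$. First, I would note that $f$ is well-defined and holomorphic on $S_\gamma$: the branch of the $n$-th root on $\Sigma$ sends $S_\gamma$ biholomorphically onto $S_{\gamma/n}$, so $f = g \circ (\cdot)^{1/n}$ makes sense and is holomorphic. Setting $b_q := g^{(q)}(0)/q!$, the assumption gives $b_q = 0$ for $q \notin n\N$, so
\[
g(w) - \sum_{r=0}^{p-1} b_{rn} w^{rn} = g(w) - \sum_{q=0}^{pn-1} b_q w^q, \qquad w \in S_{\gamma/n}.
\]
This is the key identity that allows me to convert between $p$-indexed Taylor polynomials for $f$ and $pn$-indexed Taylor polynomials for $g$.

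Next, I would use the uniform $(M^{1/n})$-asymptotic expansion of $g$: for each $L > 0$ there is $C_L > 0$ with
\[
\left| g(w) - \sum_{q=0}^{pn-1} b_q w^q \right| \leq C_L \,\frac{(M_{pn})^{1/n} |w|^{pn}}{L^{pn}}, \qquad w \in S_{\gamma/n}, \ p \in \N.
\]
Iterating the moderate growth condition $(M.2)$ $n-1$ times yields constants $A_n, B_n > 0$ (depending only on $M$ and $n$) such that $M_{pn} \leq A_n B_n^{\,p} M_p^n$ for all $p \in \N$, hence $(M_{pn})^{1/n} \leq A_n^{1/n} B_n^{\,p/n} M_p$. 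Substituting $w = z^{1/n}$ (so $|w|^{pn} = |z|^p$) into the estimate above, and using the key identity, I obtain
\[
\left| f(z) - \sum_{r=0}^{p-1} a_r z^r \right| \leq C_L A_n^{1/n} \,M_p |z|^p \left( \frac{B_n^{1/n}}{L^n} \right)^{\!p}, \qquad z \in S_\gamma.
\]
Since $L > 0$ is arbitrary, the factor $B_n^{1/n}/L^n$ can be made arbitrarily small; rewriting this as $\widetilde L := L^n/B_n^{1/n}$, the condition $\widetilde L \to \infty$ as $L \to \infty$ shows that $f \in \widetilde{\mathscr{A}}_{(M)}(S_\gamma)$ with the claimed expansion.

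Finally, the uniqueness of the coefficients in a uniform $(M)$-asymptotic expansion (as noted in the paragraph preceding the lemma) gives $f^{(p)}(0)/p! = a_p = g^{(pn)}(0)/(pn)!$. There is no serious obstacle here; the whole argument is bookkeeping, and the only point requiring a little care is the reduction of the $p$-indexed Taylor polynomial of $f$ to the $pn$-indexed Taylor polynomial of $g$ via the vanishing of $b_q$ off $n\N$, together with the elementary iteration of $(M.2)$ to convert $(M_{pn})^{1/n}$ into $M_p$ up to a geometric factor in $p$ that is absorbed by the freedom in $L$.
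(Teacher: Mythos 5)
Your proof is correct and follows essentially the same route as the paper: rewrite the $p$-th Taylor remainder of $f$ at $z$ as the $pn$-th remainder of $g$ at $w=z^{1/n}$ using the vanishing of the coefficients off $n\N$, apply the uniform $(M^{1/n})$-asymptotic bound, and convert $M_{pn}^{1/n}$ into $M_p$ up to a geometric factor via iterated $(M.2)$, absorbing that factor into the arbitrary constant $L$, with the coefficient identification coming from uniqueness. If anything, your version is slightly more careful than the paper's displayed computation, since you carry the weight $|z|^p$ (equivalently $|w|^{pn}$) explicitly throughout.
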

\begin{proof}
It is clear that $f \in \mathscr{O}(S_\gamma)$. The assumptions on $g$ imply that  \begin{align*}
&\sup_{z \in S_\gamma}\left | f(z) - \sum_{q=0}^{p-1}  \frac{g^{(qn)}(0)}{(qn)!} z^q \right| =  \sup_{w \in S_{\gamma/n}}\left | g(w) - \sum_{q=0}^{p-1}  \frac{g^{(qn)}(0)}{(qn)!} z^{nq} \right| \\
&=  \sup_{w \in S_{\gamma/n}}\left | g(w) - \sum_{j=0}^{pn-1}  \frac{g^{(j)}(0)}{(j)!} z^{j} \right| \leq CL^{-p}M^{1/n}_{pn}, \qquad p \in \N,
\end{align*}
for all $L > 0$ and suitable $C = C_L >0$.  The result now follows from the fact that, by $(M.2)$,
$$
M^{1/n}_{pn} \leq CH^pM_p, \qquad p \in \N,
$$
for some $C,H > 0$.
\end{proof}

We need the following improvement of  Theorem \ref{theorem-T}.
\begin{proposition}\label{surjective-S}
Let $M$ be a strongly regular weight sequence and let $0 < \gamma < \gamma(M)$. Then, $B: \mathscr{S}_{(M)}(S_\gamma) \rightarrow \Lambda_{(M)}(\N)$ is surjective.
\end{proposition}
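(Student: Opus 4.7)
The plan is to upgrade Thilliez's Theorem \ref{theorem-T} from $\mathscr{A}_{(M)}$ to $\mathscr{S}_{(M)}$ via a multiplicative trick. The heart of the argument is the construction of a single auxiliary function $\chi\in\mathscr{S}_{(M)}(S_\gamma)$ satisfying $\chi(0)=1$ and $\chi^{(p)}(0)=0$ for all $p\geq 1$, i.e.\ with Borel image $B(\chi)=(1,0,0,\dots)$.

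Once $\chi$ is in hand, the proposition follows easily. Given $(c_p)\in\Lambda_{(M)}(\N)$, I would choose $\gamma_1\in(\gamma,\gamma(M))$ and apply Theorem \ref{theorem-T} on $S_{\gamma_1}$ to obtain $g\in\mathscr{A}_{(M)}(S_{\gamma_1})\subset\mathscr{A}_{(M)}(S_\gamma)$ with $B(g)=(c_p)$. Set $f:=\chi\, g$ on $S_\gamma$. The Leibniz formula, combined with log-convexity $(M.1)$ (which yields $M_kM_{p-k}\leq M_0M_p$ for $0\leq k\leq p$) and moderate growth $(M.2)$, shows $f\in\mathscr{S}_{(M)}(S_\gamma)$, and
\[
f^{(p)}(0)=\sum_{k=0}^p\binom{p}{k}\chi^{(k)}(0)\,g^{(p-k)}(0)=\chi(0)\,g^{(p)}(0)=c_p,
\]
so $B(f)=(c_p)$.

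To construct $\chi$, I would pass to the strip picture via the biholomorphism $\operatorname{Log}:S_{\gamma'}\to V_{\gamma'}$ for some $\gamma<\gamma'<\gamma(M)$. Via $\tilde\chi:=\chi\circ\operatorname{Exp}$, the requirements on $\chi$ translate to asking that $\tilde\chi\in\mathscr{O}(V_{\gamma'})$ satisfy rapid $\nu_M$-decay of $\tilde\chi(w)$ as $\operatorname{Re} w\to+\infty$ and rapid $\nu_M$-decay of $\tilde\chi(w)-1$ as $\operatorname{Re} w\to-\infty$. Such $\tilde\chi$ is produced as the tail antiderivative
\[
\tilde\chi(w):=\int_w^{+\infty}\phi(\zeta)\,d\zeta
\]
of a non-trivial holomorphic bump $\phi\in\mathscr{U}_{(\nu_M)}(V_{\gamma'})$ normalized so that $\int_{\R}\phi(s)\,ds=1$, the integration path being the horizontal ray from $w$. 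Elementary tail estimates transfer the rapid $\nu_M$-decay of $\phi$ at $\pm\infty$ into the required decay of $\tilde\chi$ at $+\infty$ and of $\tilde\chi-1$ at $-\infty$. Non-triviality of $\mathscr{U}_{(\nu_M)}(V_{\gamma'})$ (and the existence of an element with non-vanishing mean) is supplied by $\gamma'<h(\nu_M)=\gamma(M)$ via Lemma \ref{equivalence}, together with the non-quasianalyticity inherent in the strongly regular assumption; equivalently, it can be read off the non-triviality of $\mathscr{S}^{-,0}_{(M)}(S_{\gamma'})$ coming from Corollary \ref{proj-omega}.

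The main obstacle is the transfer of estimates: the $\nu_M$-type bounds on $\tilde\chi$ on the strip $V_{\gamma'}$ must be translated into $\mathscr{S}_{(M)}$-type bounds on $\chi=\tilde\chi\circ\operatorname{Log}$ on the strictly smaller sector $S_\gamma$, including bounds on every derivative. This is handled by combining Cauchy's inequality on disks of fixed radius inside $V_{\gamma'}$ (exploiting the margin $\gamma'-\gamma>0$) with the Faà di Bruno formula for the composition with $\operatorname{Log}$, all controlled by the equivalence between $\omega_M$ and $\nu_M$ recorded in Lemma \ref{equivalence} and by the doubling-type estimates \eqref{assM2}--\eqref{assgamma} that follow from $(M.2)$ and $\gamma(M)>0$.
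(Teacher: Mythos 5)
Your reduction of the proposition to the existence of a single function $\chi\in\mathscr{S}_{(M)}(S_\gamma)$ with $B(\chi)=(1,0,0,\dots)$ is fine: the Leibniz estimate for $\chi g$ (using $M_kM_{p-k}\leq M_0M_p$) and the transfer of decay bounds from a slightly larger strip to $S_\gamma$ via Cauchy estimates are routine and would go through. The genuine gap is at the point you treat as a citation: the existence of a non-trivial $\phi\in\mathscr{U}_{(\nu_M)}(V_{\gamma'})$. Neither Lemma \ref{equivalence} nor Corollary \ref{proj-omega} gives non-triviality of $\mathscr{U}_{(\nu_M)}(V_{\gamma'})$ or of $\mathscr{S}^{-,0}_{(M)}(S_{\gamma'})$: Lemma \ref{equivalence} is only the identity $\gamma(M)=h(\nu_M)$ together with a topological isomorphism between two spaces that could a priori both be $\{0\}$, and $\operatorname{Proj}^1\mathscr{X}=0$ as well as property $(\Omega)$ hold trivially for the zero space. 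Likewise, non-quasianalyticity of $(p!M_p)$ produces flat ultradifferentiable functions on $\R$, not holomorphic functions on a sector of aperture close to $\gamma(M)\pi/2$ that are $M$-flat at the origin and $e^{-\omega_M(L|z|)}$-decaying at infinity (equivalently, elements of $\mathscr{U}_{(\nu_M)}(V_{\gamma'})$); whether such functions exist is exactly the aperture-dependent question at the heart of the problem (it amounts to non-injectivity of $B$ plus two-sided decay), and it is not supplied by anything you quote. It can be supplied by Thilliez's construction of optimal flat functions \cite[Theorem 2.3.1]{Thilliez}: with $G$ flat on $S_\delta$, $\gamma'<\delta<\gamma(M)$, the product $G(z)G(1/z)$ has the required two-sided decay (modulo the ramification trick if $\delta\geq 2$), and the normalization $\int_\R\phi\,ds=1$ then needs a small extra argument (a nonzero $\phi$ may have zero mean; e.g.\ replace $\phi$ by $\phi(w)e^{i\xi_0 w}$ for suitable real $\xi_0$, which stays in the space since $|\operatorname{Im}w|$ is bounded, and note via a contour shift that the horizontal-line integrals are height-independent so that $\tilde\chi\to1$ as $\operatorname{Re}w\to-\infty$). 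As written, though, the construction of the bump --- which is the technical core --- is unsupported.

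For comparison, the paper's proof avoids needing a function that is simultaneously flat at $0$ and decaying at infinity: it takes Thilliez's flat $G$, forms the nonvanishing function $P(z)=G(1/(z+1))$ which decays like $e^{-\omega_M(L|z|)}$, corrects the target data by convolving with the jet of $1/P$ at $0$ (so only Theorem \ref{theorem-T} is needed to solve the corrected problem), and sets $f=gP$; the case $\gamma\geq2$ is handled by ramification $z\mapsto z^{1/n}$ together with Lemma \ref{ramification}. If you insert \cite[Theorem 2.3.1]{Thilliez} (plus the normalization remark) into your scheme, your route becomes a correct, and arguably more modular, alternative; without it, the proof is incomplete.
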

%
\begin{proof}
We distinguish two cases (cf.\ the proof of \cite[Theorem 3.2.1]{Thilliez}).

CASE I: $\gamma < 2$. In this case, we may view $S_\gamma$ as a subset of the complex plane $\C$. Choose $\gamma < \delta < \gamma(M)$ and set $N = M^{\delta/\gamma(M)}$. Then, $\gamma(N) = \delta$ and $N \prec M$. By Proposition \cite[Theorem 2.3.1]{Thilliez} (applied to $N$), there exists $G \in \mathscr{O}(S_\gamma)$ such that $G$ does not vanish on $S_\gamma$ and 
$$
|G(z)| \leq e^{-\omega_N(\kappa/|z|)}, \qquad z \in S_\gamma,
$$
for some $\kappa > 0$. Since $N \prec M$, we have that
$$
|G(z)| \leq Ce^{-\omega_M(L/|z|)}, \qquad z \in S_\gamma,
$$
for all $L > 0$ and suitable $C = C_L > 0$.  Set $R_\gamma = \{ z \in \C \, | \, z+1 \in S_\gamma \}$ and define $P(z) = G(1/(z+1))$ for $z \in R_\gamma$. Then, $P$ does not vanish on $R_\gamma$ and
\begin{equation}
|P(z)| \leq Ce^{-\omega_M(L|z|)}, \qquad z \in R_\gamma,
\label{bounds-H}
\end{equation}
for all $L > 0$ and suitable $C = C_L > 0$. Choose $\varepsilon > 0$ such that for  each $z \in S_\gamma$ the closed disc with center $z$ and radius $\varepsilon$ belongs to $R_\gamma$. The Cauchy estimates for $P$ and \eqref{bounds-H} imply that
\begin{equation}
|P^{(p)}(z)| \leq C\varepsilon^{-p}p! e^{-\omega_M(L|z|)}, \qquad p\in \N,z \in S_\gamma, 
\label{bounds-Hder}
\end{equation}
for all $L > 0$ and suitable $C =C_L > 0$. 
As $P$ does not vanish on $R_\gamma$, the Cauchy estimates for $1/P$ yield that
\begin{equation}
|(1/P)^{(p)}(0)| \leq C\varepsilon^{-p}p!, \qquad p\in \N,
\label{inverse}
\end{equation}
for some $C > 0$. Now let $a =(a_p)_{p \in \N} \in \Lambda_{(M)}(\N)$ be arbitrary. For $p \in \N$ we define
$$
b_p =  \sum_{j=0}^p \binom{p}{j} a_j (1/P)^{(p-j)}(0). 
$$
Inequality \eqref{inverse} implies that $b= (b_p)_{p \in \N}   \in  \Lambda_{(M)}(\N)$. By  Theorem \ref{theorem-T}, there exists $g \in \mathscr{A}_{(M)}(S_\gamma)$ such that $B(g) = b$. Set $f = gP$ and note that, by \eqref{bounds-Hder}, $f \in \mathscr{S}_{(M)}(S_\gamma)$. Finally, it is clear that $B(f) = a$.

CASE II: $\gamma \geq 2$. Choose $\gamma < \delta < \gamma(M)$ and $n \in \Z_+$ such that $\delta/n < 2$. Let $a = (a_p)_{p \in \N} \in \Lambda_{(M)}(\N)$ be arbitrary.  Consider $b = (b_p)_{p \in \N} \in \C^\N$, where
	\[b_p = \begin{cases}
	\frac{(nj)!}{j!}a_j,&  p = nj \mbox{ for some $j \in \N$}, \\
	0, & \mbox{otherwise}.\\
	\end{cases}\]
Condition $(M.1)$ implies that $b \in 	 \Lambda_{(M^{1/n})}(\N)$. Since $\delta/n < 2$ and $\delta/n < \gamma(M)/n = \gamma(M^{1/n})$, CASE I (applied to $M^{1/n}$) implies that there exists $g \in \mathscr{S}_{(M^{1/n})}(S_{\delta/n})$ such that $B(g) = b$. Define $f(z) = g(z^{1/n})$ for $z \in S_{\delta} $. By Lemma \ref{ramification}, we have that $f \in \widetilde{\mathscr{A}}_{(M)}(S_{\delta})$ and $B(f) = a$. We now show that $f \in \mathscr{S}_{(M)}(S_{\gamma})$. We have that $f \in \widetilde{\mathscr{A}}_{(M)}(S_{\delta}) \subset \mathscr{A}^-_{(M)}(S_{\delta})$. Since $g \in \mathscr{S}_{(M^{1/n})}(S_{\delta/n})$, the equality $\omega_{M^{1/n}}(t) = \omega_M(t^n)/n$ for $t \geq 0$ and \eqref{assM2} yield that
$$
|f(z)| = |g(z^{1/n})| \leq Ce^{-\omega_M(L|z|)}, \qquad z \in S_\delta,
$$
for all $L > 0$ and suitable $C = C_L > 0$. Hence,  the Cauchy estimates imply that $f \in \mathscr{S}_{(M)}(S_{\gamma})$.
\end{proof}
The next result will be used to prove the implication $(ii) \Rightarrow (i)$ in Theorem \ref{main}.
\begin{proposition} \label{necc}
Let $n \in \Z_+$ and suppose that $B: \widetilde{\mathscr{A}}_{(M)}(S_n) \rightarrow \Lambda_{(M)}(\N)$ is surjective. Then, $\gamma(M) > n$.
\end{proposition}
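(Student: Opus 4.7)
The plan is to derive condition (iii) of Lemma \ref{gammaM3} for $\gamma = n$ from the hypothesized surjectivity, by combining the open mapping theorem with the biholomorphism $z = w^n$ that identifies $S_n$ with the right half-plane in $\C$, and then applying a Poisson-type integral estimate.

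First I would equip $\widetilde{\mathscr{A}}_{(M)}(S_n)$ with its natural Fr\'echet topology given by the seminorms
$$
|f|_L := \sup_{p \in \N} \sup_{z \in S_n} \frac{L^p}{M_p |z|^p} \left| f(z) - \sum_{q<p} \frac{f^{(q)}(0)}{q!} z^q \right|, \qquad L \in \Z_+,
$$
verify completeness, and observe that $B$ is continuous between Fr\'echet spaces. The open mapping theorem (Banach--Schauder) then yields constants $L_0 > 0$, $k_0 \in \N$, $C_0 > 0$ such that every $a = (a_p) \in \Lambda_{(M)}(\N)$ admits a preimage $f \in \widetilde{\mathscr{A}}_{(M)}(S_n)$ with $|f|_{L_0} \leq C_0 \sup_p k_0^p |a_p|/(p!M_p)$.

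Second, since $n$ is a positive integer, the map $z = w^n$ gives a biholomorphism between the right half-plane $\{\Re w > 0\} \subset \C$ and $S_n \subset \Sigma$. A function $f \in \widetilde{\mathscr{A}}_{(M)}(S_n)$ then corresponds to $F(w) := f(w^n)$ holomorphic on the right half-plane, and optimizing the asymptotic expansion inequality over $p$ produces the pointwise bound $|F(w)| \leq C_0' e^{-\omega_M(L_0/|w|^n)}$ in the flat case. The plan is now to feed into the open-mapping estimate a carefully chosen test sequence $a \in \Lambda_{(M)}(\N)$ so that the transferred function $F$ satisfies matching two-sided bounds (a lower bound on the positive real axis dictated by the prescribed Taylor coefficients, and the flat-type sup-norm decay on the imaginary axis). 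Applying the Poisson integral representation on the right half-plane along the imaginary axis then converts these bounds into the Dini-type inequality
$$
\int_1^\infty \frac{\omega_M(ts)}{s^{1+1/n}} \, ds \leq L \omega_M(t) + C,
$$
which by Lemma \ref{gammaM3}(iii) is equivalent to $\gamma(M) > n$.

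The main obstacle I anticipate is the last step: the exact choice of test sequence $a$ and the computation of the Poisson integral must be arranged so that the two-sided pointwise bounds on $F$ unfold into precisely the Dini-type inequality above, rather than into a strictly weaker estimate. This is the technical heart of \cite[Corollary 4.19]{JG-S-S} in the Roumieu setting, and the adaptation to the present Beurling (projective-limit) setting requires careful bookkeeping of the parameter $L_0$ through the open-mapping estimate---in the Roumieu case a single seminorm dominates, whereas here one must simultaneously control the whole family $(|f|_L)_{L \in \Z_+}$ and absorb the resulting constants through the open-mapping inequality. Once the integral inequality has been extracted, the conclusion $\gamma(M) > n$ is immediate from Lemma \ref{gammaM3}.
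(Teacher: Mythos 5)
Your functional-analytic setup (the Fr\'echet topology on $\widetilde{\mathscr{A}}_{(M)}(S_n)$, continuity of $B$, and the open-mapping estimate) is sound, and the target inequality, namely condition $(iii)$ of Lemma \ref{gammaM3} with $\gamma = n$, is indeed equivalent to the conclusion $\gamma(M) > n$. But the proof has a genuine gap exactly where you locate your ``main obstacle'': the passage from the open-mapping estimate to the Dini-type inequality is the entire analytic content of the proposition, and it is neither carried out nor reducible to a routine verification as sketched. Concretely: (a) prescribing the Taylor coefficients of $f$ at $0$ does not give a lower bound for $|F(w)| = |f(w^n)|$ along the whole positive real axis; at best one gets lower bounds at carefully chosen $t$-dependent points where the leading term of the expansion dominates the remainder, and arranging this is part of the work. (b) The interpolant of a nonzero test sequence is not flat, so the decay $|F(w)| \leq C e^{-\omega_M(L/|w|^n)}$ on the boundary rays (imaginary axis) that you want to feed into the Poisson representation is not available; one must either work with differences of interpolants or control the polynomial part of the expansion separately, and this is precisely where the argument could collapse into ``a strictly weaker estimate.'' (c) A single test sequence $a$ cannot produce an inequality valid for all $t \geq 0$; one needs a family of test data indexed by a scale parameter, with constants kept uniform through the open-mapping estimate. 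Deferring all of this to ``the technical heart of \cite{JG-S-S}'' is not a proof, the more so since the method there is not the Poisson-integral argument you describe.

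For comparison, the paper proves Proposition \ref{necc} by a different and essentially citation-based reduction: given surjectivity onto $\Lambda_{(M)}(\N)$ over $S_n$, one passes (via the ramification/restriction argument of \cite[Theorem 4.14$(i)$]{JG-S-S}, which adapts verbatim to the Beurling case) to surjectivity of the Borel-type map on the real-variable class $\mathscr{N}_{(M^*),n}([0,\infty))$ with $M^* = (M_p/p!)_p$; then \cite[Proposition 4.3]{S-V} yields that $M^{1/n}$ satisfies $(M.3)$, and Lemma \ref{gammaM3}$(i)\Leftrightarrow(ii)$ gives $\gamma(M) > n$. Note that this route uses the equivalence with condition $(ii)$ of Lemma \ref{gammaM3} rather than the integral condition $(iii)$, and it never needs the open mapping theorem, since the quantitative work is absorbed into the cited real-variable result. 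If you want a self-contained complex-analytic proof along your lines, you would have to supply the $t$-indexed test family, the pointwise lower bounds, and the harmonic-majorant computation in full; as it stands, the proposal only outlines a strategy whose decisive step is missing.
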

Proposition \ref{necc} is essentially shown in \cite{S-V} (see also \cite[Theorem 4.14$(i)$]{JG-S-S} for the Roumieu variant of this result). We briefly indicate the main idea of the proof for the reader's convenience. Let $M$ be a weight sequence and let $n \in \Z_+$. We denote by $\mathscr{N}_{(M),n}([0,\infty))$ the space consisting of all $f \in C^\infty([0,\infty))$ such that $f^{(p)}(0) = 0$ for all $p \in \N \backslash n \N$ and 
$$
\sup_{p \in \N} \sup_{x \geq 0} \frac{L^p|f^{(np)}(x)|}{p!M_p} < \infty
$$
for all $L > 0$. Note that  this definition differs from the one used in  \cite{S-V}.
\begin{proof}[Proof of Proposition \ref{necc}] We define $M^* = (M_p/p!)_{p \in \N}$. In \cite[Proposition 4.3]{S-V} it is shown that the surjectivity of the map
\begin{equation}
\mathscr{N}_{(M^*),n}([0,\infty))  \rightarrow \Lambda_{(M^*)}(\N), \, f \mapsto (f^{(np)}(0))_{p \in \N}
\label{borel}
\end{equation}
implies that  $M^{1/n}$ satisfies $(M.3)$. Hence, by Lemma \ref{gammaM3}, it suffices to show that  the map  \eqref{borel} is surjective if the map $B: \widetilde{\mathscr{A}}_{(M)}(S_n) \rightarrow \Lambda_{(M)}(\N)$ is so. In the Roumieu case, this is done in \cite[Theorem 4.14$(i)$]{JG-S-S} and the same proof applies to the Beurling case (see also the proof of \cite[Theorem 4.6]{S-V}). We point out that the asymptotic Borel map and the sequence space associated to $M$ employed here are different from the ones used in  \cite{JG-S-S}.
\end{proof}
\begin{proof}[Proof of Theorem \ref{main}]
We  show the chain of implications $(i) \Rightarrow (iv) \Rightarrow (v) \Rightarrow (iii) \Rightarrow (ii) \Rightarrow (i)$. The implications $(v) \Rightarrow (iii)$ and $(iii) \Rightarrow (ii)$ are trivial. We now prove the three other ones.

$(i) \Rightarrow (iv)$ Let $(\gamma_n)_{n \in \N}$ be an increasing sequence of positive real numbers with $\gamma_n < \gamma$ such that $\lim_{n \to \infty} \gamma_n = \gamma$. We set $B_n = B:
\mathscr{S}_{(M)}(S_{\gamma_n}) \rightarrow \Lambda_{(M)}(\N)$ for $n \in \N$. Consider the projective spectra
$$
\mathscr{X} = (\mathscr{S}^{0}_{(M)}(S_{\gamma_n}))_{n \in \N}, \qquad \mathscr{Y} = (\mathscr{S}_{(M)}(S_{\gamma_n}))_{n \in \N}, \qquad \mathscr{Z} = (\Lambda_{(M)}(\N))_{n \in \N},
$$
and note that $(B_n)_{n \in \N} : \mathscr{Y} \rightarrow \mathscr{Z}$ is a morphism. We need to show that
$$
B = \operatorname{Proj} (B_n)_{n \in \N} : \mathscr{S}^{-}_{(M)}(S_{\gamma}) =  \operatorname{Proj} \mathscr{Y}  \rightarrow  \Lambda_{(M)}(\N) =  \operatorname{Proj} \mathscr{Z}
$$
is surjective. Proposition \ref{surjective-S} yields that
\begin{center}
\begin{tikzpicture}
  \matrix (m) [matrix of math nodes, row sep=2em, column sep=3.5em]
    {0 & \mathscr{X}  & \mathscr{Y}  & \mathscr{Z}  & 0 \\
   };
  { [start chain] \chainin (m-1-1);
\chainin (m-1-2);
\chainin (m-1-3) [join={node[above,labeled]{ }}];
\chainin (m-1-4)[join={node[above,labeled] { (B_n)_{n \in \N}}}];
\chainin (m-1-5)[join={node[above,labeled] {}}];
}
\end{tikzpicture}
\end{center}
is a short exact sequence of projective spectra. As explained in Subsection \ref{sect-ML}, the result therefore follows from the fact that $\operatorname{Proj}^1 \mathscr{X} = 0$ (Corollary \ref{proj-omega}$(i)$).

$(i) \Rightarrow (iv)$ The assumption yields that
\begin{center}
\begin{tikzpicture}
  \matrix (m) [matrix of math nodes, row sep=2em, column sep=2.5em]
    {0 & \mathscr{S}^{-,0}_{(M)}(S_{\gamma})   & \mathscr{S}^{-}_{(M)}(S_{\gamma})   & \Lambda_{(M)}(\N)  & 0 \\
   };
  { [start chain] \chainin (m-1-1);
\chainin (m-1-2);
\chainin (m-1-3) [join={node[above,labeled]{ }}];
\chainin (m-1-4)[join={node[above,labeled] { B}}];
\chainin (m-1-5)[join={node[above,labeled] {}}];
}
\end{tikzpicture}
\end{center}
is a short exact sequence of Fr\'echet spaces  with continuous linear maps. Since $\mathscr{S}^{-,0}_{(M)}(S_{\gamma})$ satisfies $(\Omega)$ (Corollary \ref{proj-omega}$(ii)$) and $\Lambda_{(M)}(\N) \cong \Lambda_\infty(j)$, the result is a consequence of Theorem \ref{splitting}.

$(ii) \Rightarrow (i)$ It suffices to show that for all $\delta \in \mathbb{Q}$ with $0 < \delta < \gamma$ it holds that $\delta < \gamma(M)$. Write $ \delta= n/m$ with $n,m \in \Z_+$.  We claim that $B :  \widetilde{\mathscr{A}}_{(M^m)}(S_{n}) \rightarrow  \Lambda_{(M^m)}(\N)$ is surjective. Then, by Proposition \ref{necc}, we would have that $\gamma(M^m) > n$ and, thus, $\gamma(M) > n/m =  \delta$. We now show the claim. Let $a = (a_p)_{p \in \N} \in \Lambda_{(M^m)}(\N)$ be arbitrary.  For $p \in \N$ we define
	\[b_p = \begin{cases}
	\frac{(mj)!}{j!}a_j,&  p = mj \mbox{ for some $j \in \N$}, \\
	0, & \mbox{otherwise}.\\
	\end{cases}\]
Condition $(M.1)$ implies that $b  = (b_p)_{p \in \N} \in  \Lambda_{(M)}(\N)$. The assumption yields that there exist $g \in \mathscr{A}^-_{(M)}(S_{\gamma}) \widetilde{\mathscr{A}}_{(M)}(S_{\delta})$ such that $B(g) = b$. Define $f(z) = g(z^{1/m})$ for $z \in S_{\delta m} = S_n$. By Lemma \ref{ramification}, we have that $f \in \widetilde{\mathscr{A}}_{(M^m)}(S_{n})$ and $B(f) = a$.
\end{proof} 


\begin{thebibliography}{999}
\setlength{\itemsep}{0pt}
\bibitem{Balser} W.~Balser, \emph{Formal  power  series and linear systems  of meromorphic  ordinary  differential equations}, Springer, Berlin, 2000.

\bibitem{Debrouwere-Stieltjes} A.~Debrouwere, \emph{Solution to the Stieltjes moment problem in Gelfand-Shilov spaces}, Studia Math. \textbf{254} (2020), 295--323.

\bibitem{Debrouwere} A.~Debrouwere, \emph{Sequence space representations for spaces of entire functions with rapid decay on strips}, preprint, arXiv:2003.02364.

\bibitem{JG-S-S-ind} J.~Jim\'enez-Garrido, J.~Sanz, G.~Schindl, \emph{Indices of O-regular variation for weight functions and weight sequences}, Rev. R. Acad. Cienc. Exactas F\'is. Nat. Ser. A Mat. RACSAM \textbf{438} (2019), 920--945.

\bibitem{JG-S-S} J.~Jim\'enez-Garrido, J.~Sanz, G.~Schindl, \emph{Injectivity and surjectivity of the asymptotic Borel map in Carleman ultraholomorphic classes}, J. Math. Anal. Appl. \textbf{469} (2019), 136--168.

\bibitem{JG-S-S-2} J.~Jim\'enez-Garrido, J.~Sanz, G.~Schindl, \emph{Surjectivity of the asymptotic Borel map in Carleman-Roumieu ultraholomorphic classes defined by regular sequences},  preprint, arXiv:2003.02364.
 
\bibitem{Komatsu} H.~Komatsu, \emph{Ultradistributions I. Structure theorems and a characterization}, J. Fac. Sci. Univ. Tokyo Sect. IA Math. \textbf{20} (1973), 25--105.  




\bibitem{Ramis} J.~P.~Ramis, \emph{D\'evissage Gevrey}, Asterisque \textbf{59--60} (1978), 173--204.


\bibitem{S-V} J.~Schmets, M.~Valdivia, \emph{Extension maps in ultradifferentiable and ultraholomorphic function spaces}, Studia Math., \textbf{143} (2000), 221--250.

\bibitem{Thilliez} V.~Thilliez, \emph{Division by flat ultradifferentiable functions and sectorial extensions}, Results Math. \textbf{44} (2003), 169--188.

\bibitem{Vogt} D.~Vogt, \emph{On the functors $\operatorname{Ext}^1(E,F)$ for Fr\'echet spaces}, Studia Math. \textbf{85} (1987), 163--197.

\bibitem{Wengenroth} J.~Wengenroth, \emph{Derived functors in functional analysis}, Springer-Verlag, Berlin, 2003.



\end{thebibliography}
\end{document}